\documentclass[11pt]{article}
\usepackage{amssymb, amsmath, amsthm, hyperref}
\usepackage[left=1in,top=1in,right=1in]{geometry}
\date{}
\allowdisplaybreaks

\theoremstyle{definition}
\newtheorem{theorem}{Theorem}[section]
\newtheorem{lemma}[theorem]{Lemma}
\newtheorem{proposition}[theorem]{Proposition}
\newtheorem{claim}[theorem]{Claim}
\newtheorem{fact}[theorem]{Fact}
\counterwithin{equation}{section}

\title{Evasive sets, twisted varieties, and container-clique trees}
\author{Jeck Lim\thanks{California Institute of Technology, Pasadena, CA 91125, USA. Research partially supported by an NUS Overseas Graduate Scholarship. Also supported by NSF grant DMS-1928930 while in residence at the Simons--Laufer Mathematical Sciences Institute during the Spring 2025 semester. Email: {\tt jlim@caltech.edu}.} \and Jiaxi Nie\thanks{Georgia Institute of Technology, Atlanta, GA 30332, USA. Email: {\tt jnie47@gatech.edu}.} \and Ji Zeng\thanks{Alfréd Rényi Institute of Mathematics, Budapest 1053, Hungary. Supported by ERC Advanced Grants ``GeoScape'', no. 882971 and ``ERMiD'', no. 101054936. Also supported by NSF grant DMS-1928930 while in residence at the Simons--Laufer Mathematical Sciences Institute during the Spring 2025 semester. Email: {\tt zeng.ji@renyi.hu}.}}

\date{}

\begin{document}

\maketitle
\begin{abstract}

In the affine space $\mathbb{F}_q^n$ over the finite field of order $q$, a point set $S$ is said to be $(d,k,r)$-evasive if the intersection between $S$ and any variety, of dimension $k$ and degree at most $d$, has cardinality less than $r$. As $q$ tends to infinity, the size of a $(d,k,r)$-evasive set in $\mathbb{F}_q^n$ is at most $O\left(q^{n-k}\right)$ by a simple averaging argument. We exhibit the existence of such evasive sets of sizes at least $\Omega\left(q^{n-k}\right)$ for much smaller values of $r$ than previously known constructions, and establish an enumerative upper bound $2^{O(q^{n-k})}$ for the total number of such evasive sets. The existence result is based on our study of twisted varieties. In the projective space $\mathbb{P}^n$ over an algebraically closed field, a variety $V$ is said to be $d$-twisted if the intersection between $V$ and any variety, of dimension $n - \dim(V)$ and degree at most $d$, has dimension zero. We prove an upper bound on the smallest possible degree of twisted varieties which is best possible in a mild sense. The enumeration result includes a new technique for the container method which we believe is of independent interest. To illustrate the potential of this technique, we give a simpler proof of a result by Chen--Liu--Nie--Zeng that characterizes the maximum size of a collinear-triple-free subset in a random sampling of $ \mathbb{F}_q^2$ up to polylogarithmic factors.

\end{abstract}

\section{Introduction}

Let $\mathbb{F}_q$ denote the finite field whose order is a prime power $q$. A point set $S$ in the $n$-dimensional affine space $\mathbb{F}_q^n$ is said to be \textit{$(d,k,r)$-evasive} if the intersection between $S$ and any variety in $\mathbb{F}_q^n$, of dimension $k$ and degree at most $d$, has cardinality less than $r$. In this paper, the word ``variety'' means an algebraic set that is not necessarily irreducible. For simplicity, $(1,k,r)$-evasive sets are referred to as being \textit{$(k,r)$-evasive}. The notion of evasive sets has applications in Ramsey theory~\cite{pudlak2004pseudorandom}, incidence geometry~\cite{sudakov2024evasive,milojevic2024incidence}, error-correcting codes~\cite{guruswami2011linear,dvir2012subspace}, and extremal graph theory~\cite{blagojevic2013turan,bukh2024extremal}. A similar concept called evasive subspace families is also applied to derandomization~\cite{guo2024variety}.

Since we can partition $\mathbb{F}_q^n$ into $q^{n-k}$ many $k$-dimensional affine subspaces, a $(d,k,r)$-evasive set $S$ in $\mathbb{F}_q^n$ must satisfy \begin{equation}\label{slice}
    |S| \leq (r-1) q^{n-k} / d.
\end{equation} For many fixed parameters $d$, $k$, $n$, and $r$, this inequality is not asymptotically tight as $q \to \infty$ (see e.g.~\cite{ihringer2022large}). On the other hand, the existence of $(k,r)$-evasive sets in $\mathbb{F}_q^n$ for large $r$, whose sizes asymptotically match the upper bound \eqref{slice}, are known by many authors \cite{dvir2012subspace,blagojevic2013turan,sudakov2024evasive,ihringer2022large,conlon}. In particular, Dvir and Lovett~\cite{dvir2012subspace} gave such explicit constructions for $r \geq n^k$. For $(d,k,r)$-evasive sets in general, Dvir, Koll\'ar, and Lovett~\cite{dvir2014variety} gave explicit constructions of asymptotically optimal sizes, though the requirement for $r$ is even larger. A natural question raised and discussed in \cite{sudakov2024evasive} asks for the smallest $r$ such that there exist $(d,k,r)$-evasive sets in $\mathbb{F}_q^n$ of size $\Omega\left(q^{n-k}\right)$ as $q\to \infty$. We exhibit the existence of such optimal evasive sets for much smaller values of $r$.

\begin{theorem}\label{evasive}
    For positive integers $d$ and $k \leq n$, there exists a $(d,k,r)$-evasive set in $\mathbb{F}_q^n$ of size at least $\Omega\left(q^{n-k}\right)$ with $r = c_{d,k} \cdot n^{\frac{1}{k} + \frac{1}{k-1} + \dots + \frac{1}{2} + 1}$ where $c_{d,k}$ depends only on $d$ and $k$.
\end{theorem}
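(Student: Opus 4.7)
My plan is to deduce Theorem~\ref{evasive} directly from the paper's upper bound on the smallest degree of a twisted variety. Concretely, I expect to invoke that result to produce a projective variety $V \subseteq \mathbb{P}^n$, defined over $\mathbb{F}_q$, of dimension $n-k$ and degree $D \leq C_k \cdot n^{\frac{1}{k}+\frac{1}{k-1}+\dots+\frac{1}{2}+1}$, which is $d$-twisted in the sense defined in the abstract. The candidate evasive set is $S := V(\mathbb{F}_q) \cap \mathbb{F}_q^n$, the affine $\mathbb{F}_q$-points of~$V$.

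Next I would establish the lower bound $|S| \geq \Omega(q^{n-k})$. If the construction in the twisted varieties theorem is explicit as the image of a polynomial parametrization $\phi \colon \mathbb{F}_q^{n-k} \to \mathbb{F}_q^n$, which seems the natural form given the combinatorial setting, the bound is immediate after discarding the at most $O(q^{n-k-1})$ points where $\phi$ fails to be generically injective. Otherwise, if $V$ is produced as a geometrically irreducible variety over $\mathbb{F}_q$, then Lang--Weil yields $|V(\mathbb{F}_q)| = q^{n-k}(1 + o(1))$; the intersection of $V$ with the hyperplane at infinity is a variety of dimension at most $n-k-1$, so it subtracts only an $O(q^{n-k-1})$ lower-order term.

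The evasive property then follows from B\'ezout's theorem. Given any variety $W \subseteq \mathbb{F}_q^n$ of dimension $k$ and degree at most $d$, its projective closure $\overline{W} \subseteq \mathbb{P}^n$ has the same dimension and the same degree. Because $V$ is $d$-twisted, $V \cap \overline{W}$ has dimension zero, and B\'ezout's inequality gives $|V \cap \overline{W}| \leq \deg(V) \cdot \deg(\overline{W}) \leq Dd$. Consequently $|S \cap W| \leq |V \cap \overline{W}| \leq Dd$, so $S$ is $(d,k,r)$-evasive with $r = Dd + 1 \leq c_{d,k} \cdot n^{\frac{1}{k}+\frac{1}{k-1}+\dots+1}$, as required.

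The genuine obstacle lies not in this reduction but in the twisted variety theorem itself, which is the paper's separate main geometric result: one must exhibit, for each $k$, a variety of dimension $n-k$ whose degree matches the iterated-harmonic exponent $\frac{1}{k}+\dots+1$ yet which meets every complementary variety of degree at most $d$ in dimension zero. All the quantitative improvement over the Dvir--Koll\'ar--Lovett construction is encoded in the degree bound~$D$; the B\'ezout step above merely converts the twisted (dimension-zero intersection) property into a numerical point count, and the point-count lower bound is routine once the construction is in hand.
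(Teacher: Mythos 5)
Your reduction (twisted variety $\Rightarrow$ B\'ezout $\Rightarrow$ Lang--Weil point count $\Rightarrow$ pass to an affine chart) is exactly the paper's route, but there is a genuine gap at the very first step: you assume Theorem~\ref{twisted} hands you a $d$-twisted variety \emph{defined over $\mathbb{F}_q$}. It does not. That theorem is stated (and proved) over an algebraically closed field, so a priori the defining polynomials of $V$ have coefficients in $\overline{\mathbb{F}}_q$, and neither Lang--Weil nor the claim that $V(\mathbb{F}_q)$ is large makes sense until you produce such a $V$ with $\mathbb{F}_q$-coefficients. The statement of Theorem~\ref{twisted} alone is not enough to do this; you need its \emph{proof}. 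The paper's argument shows that the ``bad'' tuples $(f_1,\dots,f_k)\in P^n_{d_1}\times\dots\times P^n_{d_k}$ (those for which $Z(f_1,\dots,f_k)$ fails to be a $d$-twisted complete intersection) lie in a proper Zariski-closed subvariety of the parameter space. The paper then applies the Schwartz--Zippel lemma to conclude that at most $O(q^{N-1})$ of the $\Omega(q^N)$ tuples with coefficients in $\mathbb{F}_q$ are bad, so for $q$ large a good tuple exists over $\mathbb{F}_q$. This descent step is the one substantive idea in the deduction of Theorem~\ref{evasive} from Theorem~\ref{twisted}, and it is the step your proposal skips; you locate ``all the difficulty'' in the twisted-variety theorem itself, but the rationality of the construction is an additional, non-routine ingredient.

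Two smaller points. First, your Lang--Weil lower bound needs $V$ to have a top-dimensional geometrically irreducible component defined over $\mathbb{F}_q$; your hedge about a ``polynomial parametrization'' does not apply, since the construction realizes $V$ as a complete intersection (a zero locus), not as an image. Second, your handling of the affine versus projective issue (subtracting the hyperplane at infinity, of dimension $n-k-1$) is fine and essentially equivalent to the paper's device of covering $\mathbb{P}^n$ by $n+1$ affine charts and keeping the largest piece. The B\'ezout step and the resulting value $r = O(d\cdot\deg V)$ are correct as you state them.
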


We will establish the theorem above by proving the existence of twisted varieties of small degrees. For a field $\mathbb{F}$, a variety $V$ in the $n$-dimensional projective space $\mathbb{P}^n(\mathbb{F})$ is said to be \textit{$d$-twisted} if the intersection between $V$ and any variety in $\mathbb{P}^n(\mathbb{F})$, of codimension $\dim(V)$ and degree at most $d$, has dimension zero. Dvir--Lovett~\cite{dvir2012subspace} and Blagojevi\'c--Bukh--Karasev~\cite{blagojevic2013turan} introduced $1$-twisted varieties over finite fields and complex numbers respectively and independently (under different names). Dvir--Koll\'ar--Lovett~\cite{dvir2014variety} further studied the explicit construction of $d$-twisted varieties over finite fields. All these studies addressed the natural question on the smallest possible degree of twisted varieties. We give an improved upper bound on this value which is best possible in a mild sense.

\begin{theorem}\label{twisted}
    For positive integers $d$, $k \leq n$, and an algebraically closed field $\mathbb{F}$, there exists a $d$-twisted complete intersection variety in $\mathbb{P}^n(\mathbb{F})$ of dimension $n-k$ and degree at most \begin{equation*}
        c_{d,k} \cdot n^{\frac{1}{k} + \frac{1}{k-1} + \dots + \frac{1}{2} + 1}
    \end{equation*} where $c_{d,k}$ depends only on $d$ and $k$. Moreover, this bound on its degree is asymptotically tight for fixed $d,k$ as $n \to \infty$, that is, only the constant $c_{d,k}$ may be improved.
\end{theorem}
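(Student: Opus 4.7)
The plan is to construct $V$ as an iterated generic hypersurface section in $\mathbb{P}^n(\mathbb{F})$ with carefully chosen degrees, and to establish the tightness by a dimension count on the parameter scheme of positive-dimensional low-degree subvarieties contained in $V$.

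For the existence half, I would fix positive integers $D_1,\dots,D_k$ with $D_i \asymp n^{1/(k-i+1)}$, so that $\prod_{i=1}^{k} D_i \le c_{d,k}\, n^{1+\frac{1}{2}+\dots+\frac{1}{k}}$. Starting from $V_0 = \mathbb{P}^n(\mathbb{F})$, hypersurfaces $H_i \subseteq \mathbb{P}^n$ of degree $D_i$ are chosen one at a time so that $V_i := V_{i-1} \cap H_i$ is a complete intersection of codimension $i$ containing no positive-dimensional subvariety of degree at most $d$. Once $V := V_k$ has this property, $d$-twistedness follows immediately: any positive-dimensional component $W$ common to $V$ and a variety $X$ with $\dim X = k$ and $\deg X \le d$ would be a positive-dimensional subvariety of $V$ of degree $\le d$, violating the invariant.

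The key lemma underlying the inductive step combines two dimension estimates. For any fixed subvariety $W \subseteq V_{i-1}$ with $\dim W = j \ge 1$ and $\deg W \le d$, the degree-$D_i$ hypersurfaces of $\mathbb{P}^n$ containing $W$ form a linear subspace of codimension equal to the Hilbert function value $h_W(D_i) = \Theta(D_i^j)$, with implicit constants depending only on $d$ and $j$. On the other hand, all such $W$ inside $V_{i-1}$ sweep out a constructible family in a Chow/Hilbert scheme of dimension polynomial in $n$, with exponent depending only on $d$ and $k$. The choice $D_i \asymp n^{1/(k-i+1)}$ is tuned so that the first quantity exceeds the second for every admissible $j$ simultaneously, so a standard Bertini/genericity argument produces an $H_i$ avoiding every $W$, preserving the invariant.

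For the tightness statement, suppose $V$ is a $d$-twisted complete intersection of multidegree $(D_1,\dots,D_k)$ in $\mathbb{P}^n(\mathbb{F})$. Using the Koszul resolution of the ideal of $V$, one computes, for each $j \in \{1,\dots,k-1\}$, the expected dimension of the parameter scheme of degree-$\le d$ subvarieties of dimension $j$ contained in $V$; positivity of this expected dimension forces (over an algebraically closed field) the actual existence of such a subvariety, contradicting $d$-twistedness. Running $j$ through all intermediate dimensions yields a family of lower-bound constraints on the $D_i$'s, and minimizing $\prod_i D_i$ subject to these constraints pins down the exponent $1+\tfrac12+\dots+\tfrac1k$, leaving only the constant $c_{d,k}$ unresolved.

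The main obstacle is the inductive genericity step: the family of positive-dimensional, low-degree subvarieties of $V_{i-1}$ is infinite, so ruling them out with a single generic hypersurface cannot be done by a union bound over finitely many candidates. Packaging them as a constructible family inside a Chow scheme, bounding its dimension polynomially in $n$, and balancing these dimensions sharply against the Hilbert-function estimates across all intermediate $j$'s simultaneously---in order to capture the exact exponent $1+\tfrac12+\dots+\tfrac1k$ rather than a crude upper bound---is the technical core of the argument.
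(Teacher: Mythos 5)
Your overall strategy---parameter-space dimension counting with Hilbert-function lower bounds against a Chow-variety dimension bound for existence, and a Fano-scheme argument for tightness---is the right one, but both halves contain genuine gaps. In the existence half, the inductive invariant ``$V_i$ contains no positive-dimensional subvariety of degree at most $d$'' fails in two ways. First, it does not imply $d$-twistedness: a positive-dimensional component $W$ of $V\cap X$ lies in $X$, but its degree is controlled only by $\deg V\cdot\deg X$ (refined B\'ezout), not by $d$; e.g.\ a $k$-plane $X$ can meet $V$ in a curve of degree $\deg V$, which violates twistedness without producing any low-degree subvariety of $V$. Second, the invariant is unattainable at the intermediate stages: by the very Fano-scheme bound (Theorem~\ref{fano}) you need for tightness, every hypersurface of degree $D_1=O(n^{1/k})$ in $\mathbb{P}^n$ contains lines once $k\ge 2$ and $n$ is large, so $V_1$ already contains a degree-one curve. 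Relatedly, asking the codimension $\Theta(D_i^{\,j})$ to beat the family dimension ``for every admissible $j$ simultaneously'' is dominated by $j=1$ and would force $D_i\gtrsim n$, destroying the exponent. The correct invariant is external: for every effective $k$-cycle $X$ of degree at most $d$, $\dim\left(V_i\cap|X|\right)\le k-i$. Then at step $i$ only the $(k+1-i)$-dimensional components of $V_{i-1}\cap|X|$ are relevant, the bad locus in $P^n_{D_i}$ has codimension at least $\binom{D_i+k+1-i}{k+1-i}$, and comparing with $\dim \text{Ch}(d,k,n)=O(n)$ (Theorem~\ref{chow}; linearity in $n$, not merely polynomiality, is what produces the exponent) gives exactly $D_i\asymp n^{1/(k+1-i)}$. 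The ``infinite family'' obstacle you flag is then handled by a single global dimension count over $P^n_{D_1}\times\cdots\times P^n_{D_k}\times\text{Ch}(d,k,n)$ together with upper semicontinuity of fibre dimension.

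The tightness half also has two gaps. ``Positivity of the expected dimension forces existence'' is a heuristic, not a theorem; the statement that is actually available (and needed) is the Debarre--Manivel theorem on Fano schemes of \emph{linear} subspaces of complete intersections (Theorem~\ref{fano}). More seriously, constraints extracted from subvarieties of $V$ itself only bound $\max_i D_i$ from below: the $j$-plane condition reads $\sum_{i}\binom{D_i+j}{j}\gtrsim (j+1)(n-j)$, whose binding case $j=1$ gives $\sum_i D_i\gtrsim n$ and says nothing about the smaller $D_i$ individually, so you only get $\deg V=\Omega(n)$. The missing idea is to order $D_1\le\cdots\le D_k$ and, for each $i$, apply the Fano-scheme bound to the partial intersection $Z(f_1,\dots,f_i)$ of the $i$ lowest-degree hypersurfaces: this variety cannot contain a $(k+1-i)$-plane $F$, since otherwise a $k$-plane through $F$ would meet $V$ in the positive-dimensional set $F\cap Z(f_{i+1},\dots,f_k)$, contradicting $1$-twistedness. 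This forces $D_i=\Omega\left(n^{1/(k+1-i)}\right)$ for every $i$ separately, and taking the product yields the exponent $1+\tfrac12+\cdots+\tfrac1k$.
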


Let us remark on some limitations of our theorems. The ``asymptotic optimality'' of Theorem~\ref{twisted} is restricted to complete intersection varieties. Hence there still exists potential for non-complete-intersection twisted varieties to have degrees much smaller than the upper bound in Theorem~\ref{twisted}. It seems to be an interesting problem in algebraic geometry whether such potential is true. In most algorithmic applications, it is desirable that evasive sets are explicitly constructed, that is, efficiently and deterministically. Our Theorem~\ref{evasive} implies only an efficient probabilistic algorithm to construct evasive sets as random algebraic varieties. More precisely, the proof of Theorem~\ref{evasive} says the zero locus of randomly sampled polynomials, of degrees roughly $n^{1/i}$ for $1\leq i\leq k$, will give us a $(d,k,r)$-evasive set in $\mathbb{F}_q^n$ with high probability.

Nevertheless, random algebraic varieties can still be useful in some algorithmic scenarios since they carry more structures than naively sampled random point sets (see Theorem~10 in~\cite{guruswami2011linear}). Outside of algorithmic considerations, the existence of optimal evasive sets has many interesting consequences in incidence geometry, see e.g. Sudakov--Tomon~\cite{sudakov2024evasive} and Milojevi\'c--Sudakov--Tomon~\cite{milojevic2024incidence}. In particular, an extremal example towards a recent incidence bound in~\cite{milojevic2024incidence} is based on $(2,k,r)$-evasive sets (see Section~5.3 in~\cite{milojevic2024incidence}). Hence our Theorem~\ref{evasive} gives a lower bound for the incidence problem considered there with a better constant than that from \cite{dvir2014variety}.

\medskip

We also address the enumeration problem for evasive sets in this paper. As a consequence of Theorem~\ref{evasive}, there are $2^{\Omega\left(q^{n-k}\right)}$ many $(k,r)$-evasive sets in $\mathbb{F}_q^n$ when $r$ is moderately large in terms of $k$ and $n$. On the other hand, there are at most $2^{O\left(q^{n-k}\log q\right)}$ many point sets satisfying \eqref{slice}, and this is a trivial upper bound for the total number of such evasive sets. We show that the lower bound here is closer to the truth.
\begin{theorem}\label{counting}
    There are at most $2^{O(q^{n-k})}$ many $(k,r)$-evasive sets in $\mathbb{F}^n_q$.
\end{theorem}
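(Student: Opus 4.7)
The plan is to apply the hypergraph container method. Define the $r$-uniform hypergraph $H$ on vertex set $\mathbb{F}_q^n$ whose edges are the $r$-element subsets of points lying on a common $k$-flat, so that the $(k,r)$-evasive sets are precisely the independent sets of $H$; the task is to show $H$ has at most $2^{O(q^{n-k})}$ independent sets.

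First I would verify the hypotheses of a container lemma such as that of Saxton--Thomason or Balogh--Morris--Samotij. A $j$-subset $T \subseteq \mathbb{F}_q^n$ whose affine span has dimension $t$ lies in $O(q^{(n-k)(k-t)})$ $k$-flats (via Gaussian binomials), and each such flat extends $T$ to an $r$-edge in $O(q^{k(r-j)})$ ways. Case analysis on $t$ gives codegree bounds of the form $d_j(H) \leq O(q^{(n-k)(k-j+1)+k(r-j)})$ in the generic case and comparable bounds otherwise, from which the codegree hypothesis holds with parameter $\tau = q^{-k}$, giving $\tau \cdot |V(H)| = q^{n-k}$ as the target container size. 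A supersaturation input is also needed: any $T \subseteq \mathbb{F}_q^n$ with $|T| \geq C\,q^{n-k}$ (for $C$ large in terms of $k,r,n$) spans $\Omega(|T| \cdot q^{k(r-1)})$ edges of $H$, obtained by averaging $|T \cap L|$ over random $k$-flats $L$ and applying Jensen's inequality.

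The main obstacle is a logarithmic gap. A direct or iterated application of the standard container lemma produces at most $2^{O(q^{n-k}\log q)}$ containers because the fingerprint cost $\log \binom{q^n}{q^{n-k}} = \Theta(q^{n-k}\log q)$ is off by a $\log q$ factor from the desired bound. To save this factor I would employ the \emph{container-clique tree} technique indicated in the abstract. The idea exploits the ``clique-like'' structure of $k$-flats: once $r-1$ points of a flat $L$ lie in the evasive set $S$, all remaining $q^k - (r-1)$ points of $L$ are forbidden. A tree-based algorithm is then natural: at each node, select a $k$-flat $L$ along which the current candidate set is still dense, branch over the at most $\sum_{i<r}\binom{q^k}{i}$ possibilities for $S \cap L$, and delete the other points of $L$ from the candidate set. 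The crux is to organize this branching so that (i) the leaves correspond to containers of size $O(q^{n-k})$, and (ii) the total number of leaves is $2^{O(q^{n-k})}$ rather than $2^{O(q^{n-k}\log q)}$. Once this is achieved, summing $2^{O(q^{n-k})}$ subsets over the leaves yields the claimed bound.
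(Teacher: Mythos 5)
Your setup is right: the hypergraph of $(k,r)$-sets, the identification of evasive sets with independent sets, and the diagnosis that a naive container argument loses a $\log q$ factor through the fingerprint cost $\tau|V(H)|\log(1/\tau)$ with $\tau|V(H)|=q^{n-k}$. But the proposal stops exactly at the crux: you state as a goal that the branching must be organized so that the number of leaves is $2^{O(q^{n-k})}$, and the mechanism you sketch for achieving this does not do the job. Branching over the $\sum_{i<r}\binom{q^k}{i}$ traces of the independent set on dense flats is cheap (the paper in effect does this, deferring the choice of $Y_i\subset C^x_i$ with $|Y_i|<r$ to the final count, at a total cost of $\binom{q^k}{r}^{O(\sqrt q\log q)}=2^{o(q^{n-k})}$), but it does not touch the real source of the $\log q$: every application of the container lemma with $\tau|V|=q^{n-k}$ still produces $2^{\Theta(q^{n-k}\log q)}$ containers, regardless of how the tree is arranged around it.

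The missing idea is that the clique deletions buy you a \emph{stronger balanced supersaturation}, which in turn permits a \emph{smaller} $\tau$. In the paper, after all rich $k$-flats (those meeting the current container $C$ in at least $|C|/\sqrt q$ points) have been stripped off as cliques, Lemma~\ref{counting_supersaturation} produces a sub-hypergraph whose codegrees satisfy the container hypothesis with $\tau=\theta q^{n-k}/(|P|q^{\epsilon})$, $\epsilon=1/(2r)$; the extra factor $q^{-\epsilon}$ makes each container-lemma application cost only $O(q^{n-k-\epsilon}\log q)$, so even after the $O(\log q)$ levels of the tree the leaf count is $2^{O(q^{n-k-\epsilon}\log^2 q)}=2^{o(q^{n-k})}$, and the $2^{O(q^{n-k})}$ in the theorem comes solely from enumerating subsets of the final containers of size $O(q^{n-k})$. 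Your codegree and supersaturation sketches also gloss over the point that makes the no-rich-flat hypothesis indispensable even before any $\log$-saving is attempted: if some $k$-flat carries a constant fraction of $P$, then pairs of points on that flat have codegree far exceeding $c\tau|E|/|V|$, and an averaging-plus-Jensen count gives only a lower bound on $|E|$, not the balanced (codegree-controlled) edge subset the container lemma needs --- the paper obtains the latter by a randomized sparsification whose analysis uses the bound $|F\cap P|\le 2|P|/\sqrt q$ in an essential way. So the architecture you propose is the right one, but the step you defer is the theorem's actual content.
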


This theorem is motivated by the extensive studies on the enumeration problems for $H$-free (hyper-)graphs~\cite{erdos1976asymptotic, erdos1986asymptotic,kleitman1982number,nagle2006extremal,balogh2011number,morris2016534,ferber2020supersaturated} and general position sets~\cite{BS,roche2022arcs,bhowmick2022counting,chen2023random,nenadov2024number}. In particular, Ferber, McKinley, and Samotij~\cite{ferber2020supersaturated} proved that, under some natural assumption, the number of $H$-free graphs on $n$ vertices is at most \begin{equation}\label{gwen}
    2^{O(\text{ex}(n,H))}.
\end{equation} Here, the \textit{extremal number} $\text{ex}(n,H)$ is defined as the maximum number of edges an $H$-free graph can have on $n$ vertices. While it is tempting to improve the exponent in \eqref{gwen} to $(1+o(1))\text{ex}(n,H)$, Morris and Saxton~\cite{morris2016534} showed this is impossible for $H = C_6$ using a previous result~\cite{furedi2006turan} on $\text{ex}(n,C_6)$. In fact, we cannot always expect an enumerative upper bound that is exponential in the ``extremal number'' of the considered objects. For example, an $n$-queen configuration is a placement of $n$ queens on an $n$-by-$n$ chessboard so that no two queens can attack one another. Bowtell--Keevash~\cite{bowtell2021n} and Luria--Simkin~\cite{simkin2022lower} independently proved that there are at least $((1-o(1))n/e^3)^n$ many $n$-queen configurations. Another example of this kind is the enumeration of Steiner triple systems~\cite{keevash2018counting}.

Our proof of Theorem~\ref{counting} follows the nowadays standard container method \cite{balogh2015independent,saxton2015hypergraph}, combined with a new technique we call container-clique trees. Heuristically speaking, this technique twists the common procedure of applying the hypergraph container lemma, and allows us some flexibility to assume the absence of large cliques when proving a balanced supersaturation result, which often serves as a key ingredient in the container method. We believe this technique is general enough to be applied to other enumeration problems involving the container method.

To illustrate the potential of container-clique trees, we give a simpler proof of the following result by Chen--Liu--Nie--Zeng~\cite{chen2023random}. For a real number $p\in [0,1]$, we define $\alpha(\mathbb{F}_q^2,p)$ to be the maximum size of a collinear-triple-free set that is contained in a \textit{$p$-random set} $\mathbf{S}_p \subset \mathbb{F}_q^2$. Here, by ``$p$-random'' we mean that every point of $\mathbb{F}_q^2$ is sampled into $\mathbf{S}_p$ independently with probability $p$. We characterize the behaviour of $\alpha(\mathbb{F}_q^2,p)$ up to polylogarithmic factors, denoted as the $q^{o(1)}$ terms in the statement below.

\begin{theorem}\label{no-three-in-line}
As the prime power $q\to \infty$, with high probability, we have 
\begin{equation*}
\alpha(\mathbb{F}_q^2,p)=
\left\{
    \begin{aligned}
    &\Theta(pq^{2}),~~~&q^{-2+o(1)}\leq p\leq q^{-3/2-o(1)};\\
    &q^{1/2+o(1)},~~~&q^{-3/2-o(1)}\leq p\leq q^{-1/2+o(1)};\\
    &(1 \pm o(1))pq,~~~&q^{-1/2+o(1)}\leq p\leq 1.
\end{aligned}
\right.
\end{equation*}
\end{theorem}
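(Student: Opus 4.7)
The three regimes in Theorem~\ref{no-three-in-line} correspond to different mechanisms: in the sparse regime the random set $\mathbf{S}_p$ is nearly collinear-triple-free by itself, in the dense regime the hard constraint $|S| \leq q+1$ for collinear-triple-free sets in $\mathbb{F}_q^2$ is binding, and in the intermediate regime a balance between random sparsity and algebraic structure takes over. The plan is to handle the easy lower bounds directly by alteration and by intersecting with an explicit cap set, and to establish the hard upper bounds via the container-clique tree method, which is the main contribution of this part of the paper.

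For the lower bounds: when $p \leq q^{-3/2-o(1)}$, the expected number of collinear triples in $\mathbf{S}_p$ is $O(p^3q^5) = o(pq^2)$, so alteration yields a collinear-triple-free subset of size $(1-o(1))|\mathbf{S}_p| = \Theta(pq^2)$. When $p \geq q^{-1/2+o(1)}$, fix an explicit collinear-triple-free set $S_0 \subset \mathbb{F}_q^2$ of size $q$ such as $\{(t,t^2) : t \in \mathbb{F}_q\}$; then $S_0 \cap \mathbf{S}_p$ is itself collinear-triple-free of size $(1\pm o(1))pq$ by Chernoff. For the intermediate regime, a random subsample of $\mathbf{S}_p$ of size $k\sim \sqrt{q}$ has expected number of collinear triples $\lesssim k^3/q \sim k$, and alteration then delivers a collinear-triple-free subset of size $q^{1/2-o(1)}$.

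The crux is the upper bounds. The target is a \emph{balanced supersaturation} statement: every $S \subset \mathbb{F}_q^2$ with $|S| \geq Cq$ contains a family of $\Omega(|S|^3/q)$ collinear triples such that each pair of points is covered by at most $q^{o(1)}$ triples in the family. The obstruction to such a statement is that a line $\ell$ containing many points of $S$ produces triples whose pairs all concentrate on $\ell$, destroying balance. The container-clique tree device circumvents this by building a tree whose internal nodes branch on ``heavy'' lines of the current residual set (fixing the heavy line's points into the growing partial container and excluding its remaining points from further consideration) and whose leaves run the standard container step on a residual set with no heavy lines; heavy lines here play the role of cliques in the collinear-triple hypergraph. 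Because every branch has bounded degree and the tree depth is sublinear, this produces a family $\mathcal{C}$ of containers, each of size at most $(1+o(1))q$, that covers every collinear-triple-free subset of $\mathbb{F}_q^2$, with $\log|\mathcal{C}| = o(q)$.

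With $\mathcal{C}$ in hand, the upper bounds follow by Chernoff. For each $C \in \mathcal{C}$, $|C \cap \mathbf{S}_p|$ is stochastically dominated by $\mathrm{Bin}(|C|,p)$, and concentration gives $|C \cap \mathbf{S}_p| \leq \max\{(1+o(1))p|C|,\, q^{o(1)}\}$ with probability at least $1 - 2^{-\omega(q)}$; a union bound over $\mathcal{C}$ survives since $|\mathcal{C}| \leq 2^{o(q)}$. This yields $\alpha(\mathbb{F}_q^2,p) \leq (1+o(1))pq$ in the dense regime and $\alpha(\mathbb{F}_q^2,p) \leq q^{1/2+o(1)}$ in the intermediate regime. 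I expect the main obstacle to be establishing balanced supersaturation on a residual set with no heavy line: one must verify that such a set still carries $\Omega(|S|^3/q)$ collinear triples with pairwise codegree $q^{o(1)}$, which demands an incidence-theoretic count exploiting that at most $q$ points lie on any line of $\mathbb{F}_q^2$, and a careful accounting of how the depth of the container-clique tree trades off against container size.
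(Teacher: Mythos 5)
Your proposal matches the paper's proof in all essentials: the same alteration and moment-curve lower bounds, and the same container-clique tree for the upper bound, in which rich lines are stripped off as cliques so that the residual set satisfies the codegree hypotheses of the hypergraph container lemma, yielding $2^{o(q)}$ effective containers of size $(1+o(1))q$ to which Chernoff's bound and a union bound are applied. The only divergences are cosmetic: the paper obtains the middle-regime bounds by monotonicity of $\alpha(\mathbb{F}_q^2,p)$ in $p$ rather than by subsampling, and its rich-line threshold $|C|/\sqrt{q}$ (forced by the $\Delta_3$ condition of the container lemma, which pins $\tau \gtrsim \sqrt{q}/|C|$) makes your stronger $q^{o(1)}$ pairwise-codegree target unnecessary.
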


The old proof of this theorem in \cite{chen2023random} leverages the pseudorandomness of point-line incidence relations in $\mathbb{F}_{q}^2$ and can only conclude a weaker bound $\Theta(pq)$ in the third range. In our new proof, such an ad-hoc technique will be bypassed, allowing us to establish a stronger bound $(1 \pm o(1))pq$ in the third range.

\bigskip

The rest of this paper is organized as follows. In Section~\ref{sec:ag}, we present some preliminary results from algebraic geometry. In Section~\ref{sec:boris}, we prove Theorems~\ref{evasive} and~\ref{twisted}. In Section~\ref{sec:cctree}, we introduce the container-clique tree and use it to prove Theorem~\ref{no-three-in-line}. Section~\ref{sec:counting} is devoted to the proof of Theorem~\ref{counting}. Section~\ref{sec:remark} includes some final remarks. Throughout our paper, the asymptotic notations are understood with the prime power $q$ tending to infinity unless otherwise stated; all logarithms are in base two; we omit floors and ceilings since they are not crucial for the sake of clarity in our presentation.

\section{Some results from algebraic geometry}\label{sec:ag}

Other than the terminological difference that a variety can be reducible for us, the readers are referred to \cite{hartshorne} for basic notions and facts of algebraic geometry. The results stated in this section are over an algebraically closed field $\mathbb{F}$. We write $\mathbb{P}^n(\mathbb{F})$ as $\mathbb{P}^n$ for abbrevity. A \textit{$k$-plane} refers to a $k$-dimensional projective subspace inside another projective space of higher dimension.

We use $P^n_{d}$ to denote the space of all homogeneous polynomials in $n+1$ variables of degree $d$ up to scaling over the field $\mathbb{F}$. We consider $P^n_{d}$ as a variety by identifying it with the projective space $\mathbb{P}^{\binom{d+n}{n}-1}$. A variety $V \subset \mathbb{P}^n$ is a \textit{complete intersection} if there exist $n - \dim V$ homogeneous polynomials that generate all homogeneous polynomials vanishing on $V$. We need the fact below to argue that most polynomial sequences define complete intersections.
\begin{fact}\label{reduced}
    There is a dense subset of points $(f_1,\ldots,f_k) \in P^{n}_{d_1}\times\dots\times P^n_{d_k}$ such that the subscheme defined by $f_1,\ldots,f_k$ in $\mathbb{P}^n$ is reduced.
\end{fact}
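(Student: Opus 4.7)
The plan is to proceed by induction on $k$, leveraging Bertini's theorem on preservation of reducedness under generic hypersurface sections.

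For the base case $k=1$, the set of $f_1 \in P^n_{d_1}$ that are squarefree (hence define reduced hypersurfaces) is open and dense: the complement is the union over $1 \leq e \leq d_1/2$ of the images of the multiplication maps $P^n_e \times P^n_{d_1 - 2e} \to P^n_{d_1}$, $(g,h) \mapsto g^2 h$, each of strictly smaller dimension than $P^n_{d_1}$.

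For the inductive step, let $U_{k-1} \subset P^n_{d_1}\times\cdots\times P^n_{d_{k-1}}$ be a dense subset provided by the induction hypothesis. For each $(f_1,\ldots,f_{k-1})\in U_{k-1}$, set $X = V(f_1,\ldots,f_{k-1})$, which is reduced. Since $|\mathcal{O}_{\mathbb{P}^n}(d_k)|$ restricted to $X$ is base-point-free, a Bertini-type theorem valid in any characteristic (Jouanolou's preservation of geometric reducedness for generic members of a base-point-free linear system on a reduced scheme) provides a dense open subset $W_X \subset P^n_{d_k}$ such that $X \cap V(f_k)$ is reduced for every $f_k \in W_X$. Define
\[ S = \{(f_1,\ldots,f_{k-1},f_k) : (f_1,\ldots,f_{k-1}) \in U_{k-1},\ f_k \in W_{V(f_1,\ldots,f_{k-1})} \}. \]
To see that $S$ is dense, let $W$ be any nonempty open subset of the full product. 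Its projection $W'$ to $P^n_{d_1}\times\cdots\times P^n_{d_{k-1}}$ is open (since projections from products of varieties are flat and hence open) and nonempty, so $W'$ meets $U_{k-1}$ at some $(f_1,\ldots,f_{k-1})$. The fiber of $W$ over this point is a nonempty open subset of $\{(f_1,\ldots,f_{k-1})\} \times P^n_{d_k}$, which meets the dense set $W_{V(f_1,\ldots,f_{k-1})}$, yielding an element of $S \cap W$.

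The main conceptual point is selecting the correct form of Bertini's theorem. The classical Bertini theorem on smoothness can fail in positive characteristic, but the weaker statement on preservation of geometric reducedness under generic members of a base-point-free linear system on a reduced scheme does hold in all characteristics. If one prefers to avoid citing this, one can instead argue directly by exhibiting the ``non-reduced locus'' inside $P^n_{d_1}\times\cdots\times P^n_{d_k}$ as contained in a proper closed subvariety, via an incidence-variety dimension count at each candidate singular point of the prospective intersection.
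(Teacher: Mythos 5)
Your route is genuinely different from the paper's. The paper deduces Fact~\ref{reduced} from generic \emph{smoothness}: an incidence-variety dimension count shows that for generic $(f_1,\ldots,f_k)$ the Jacobian criterion holds at every point of $Z(f_1,\ldots,f_k)$, and a smooth projective scheme is reduced. You instead induct on $k$ and only ever ask that reducedness be preserved under a generic further section. Your base case (squarefree forms are dense, by the dimension count on the images of $(g,h)\mapsto g^2h$) and your final density argument (openness of projections from products, fibering $S$ over $U_{k-1}$, and the fact that a dense set meets every nonempty open) are both fine.

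The one step that needs repair is the form of Bertini you invoke. It is \emph{false} in characteristic $p$ that the generic member of an arbitrary base-point-free linear system on a reduced scheme is reduced: on $\mathbb{P}^1$ the pencil spanned by $x^p$ and $y^p$ is base-point-free, yet every member is a $p$-fold point. What saves your argument is that you are not using an arbitrary base-point-free system: you use the restriction to $X$ of the \emph{complete} system $|\mathcal{O}_{\mathbb{P}^n}(d_k)|$, i.e.\ hyperplane sections of the image of $X$ under the $d_k$-th Veronese embedding, which is a reduced closed subscheme of a larger projective space. For generic hyperplane sections of a reduced closed subscheme (equivalently, for very ample systems, or more generally when the associated morphism is an immersion), preservation of geometric reducedness does hold in every characteristic; this is the correct reading of Jouanolou's theorem, whose reducedness clause carries exactly the separability/immersion hypothesis needed to exclude examples like the pencil above. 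Replace ``base-point-free'' by ``very ample'' (or route the section through the Veronese embedding explicitly) and your induction closes. The trade-off versus the paper: you avoid the rank computation for the Jacobian of a $k$-tuple, but you must be careful with the characteristic-sensitive version of Bertini; the paper's generic-smoothness count is essentially the ``direct incidence-variety'' alternative you mention at the end.
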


To see this, we can argue that there is a dense subset of points $(f_1,\ldots,f_k)$ such that the subscheme defined by $f_1,\ldots,f_k$ satisfies the Jacobian criterion for smoothness everywhere (see Exercise I.5.15 in \cite{hartshorne}), then we can use the fact that a smooth projective scheme is necessarily reduced (see Example 10.0.3 in \cite{hartshorne}). 

For a variety $V \subset \mathbb{P}^n$, its \textit{Hilbert function} can be defined as \begin{equation}\label{hilbert1}
    \varphi_V(d) = \dim P^n_{d} - \dim I_d(V),
\end{equation} where $I_d(V) = \{ f \in P^n_{d}:~ \text{$f$ vanishes on $V$}\}$. Here, $I_d(V)$ is regarded as a subvariety of $P^n_{d}$. A key ingredient in our proofs of Theorems~\ref{evasive} and~\ref{twisted} is a basic estimate as follows (Lemma~6 in~\cite{bukh2024extremal}, see also \cite{sombra1997bounds}).
\begin{lemma}\label{hilbert2}
    If a variety $V \subset \mathbb{P}^n$ has dimension $k$, then $\varphi_V(d) \geq \binom{d+k}{k}$.
\end{lemma}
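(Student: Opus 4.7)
I would prove this by induction on $k = \dim V$. The base case $k = 0$ is immediate: then $V$ is a nonempty finite set of points, and the restriction of any nonzero constant is nonzero on $V$, giving $\varphi_V(d) \geq 1 = \binom{d+0}{0}$ for every $d \geq 0$.

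For the inductive step, I would perform a general hyperplane section. Choose a hyperplane $H \subset \mathbb{P}^n$, cut out by some linear form $\ell$, such that $H$ contains no irreducible component of $V$; this is a generic condition on the dual projective space, since for each component the locus of hyperplanes containing it is a proper linear subspace. Because $V$ is reduced, the associated primes of its homogeneous coordinate ring $S(V)$ are precisely the minimal primes corresponding to its components, so this choice of $H$ makes $\ell$ a non-zero-divisor on $S(V)$; moreover, by Krull's height theorem, $W := V \cap H$ has dimension $k-1$. Multiplication by $\ell$ then yields a graded short exact sequence
\begin{equation*}
0 \longrightarrow S(V)_{d-1} \xrightarrow{\cdot \ell} S(V)_d \longrightarrow S(W)_d \longrightarrow 0,
\end{equation*}
which translates into the recursion $\varphi_V(d) - \varphi_V(d-1) = \varphi_W(d)$ on degree-$d$ Hilbert function values.

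Iterating the recursion from $\varphi_V(0) = 1$ gives $\varphi_V(d) = 1 + \sum_{e=1}^d \varphi_W(e)$. Plugging in the inductive hypothesis $\varphi_W(e) \geq \binom{e+k-1}{k-1}$ and invoking the hockey-stick identity $\sum_{e=0}^d \binom{e+k-1}{k-1} = \binom{d+k}{k}$ then yields the claimed bound $\varphi_V(d) \geq \binom{d+k}{k}$. The only delicate step is the genericity argument for $H$: it must simultaneously avoid every irreducible component of $V$ (so that $\ell$ is a non-zero-divisor and the sequence above is exact on the left) and cut $V$ in a variety of dimension exactly $k-1$ (so that the inductive hypothesis applies to $W$). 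Both are open conditions on the dual projective space and hold jointly for a general $H$, so there is no real obstruction.
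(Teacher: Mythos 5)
The paper does not actually prove this lemma: it imports it verbatim as Lemma~6 of Bukh's paper \cite{bukh2024extremal} (with a pointer to Sombra), so there is no in-paper argument to compare against. Your hyperplane-section induction is a correct, self-contained proof and is the standard way to establish this bound. Two small points deserve a remark. First, the cokernel of multiplication by $\ell$ on $S(V)$ is $\bigl(S(V)/\ell S(V)\bigr)_d$, the coordinate ring of the \emph{scheme-theoretic} intersection, which may fail to be reduced; it surjects onto $S(W)_d$ but need not equal it, so your recursion should be the inequality $\varphi_V(d)-\varphi_V(d-1)\geq \varphi_W(d)$ rather than an equality. Since you only need a lower bound, this costs nothing. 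Second, the base case for $d\geq 1$ is not literally about constants: one should take a linear form $\ell$ not vanishing at some point of $V$ and observe that $\ell^d$ restricts to a nonzero element of $S(V)_d$, giving $\varphi_V(d)\geq 1$ for all $d$. Everything else is sound: since the paper's varieties are reduced, $I(V)$ is radical, its associated primes are the minimal primes of the components, a general hyperplane avoids all of them (the field is algebraically closed, so there are enough hyperplanes), $\ell$ is then a non-zero-divisor on $S(V)$, and $W=V\cap H$ has dimension exactly $k-1$ because the top-dimensional component meets $H$ in dimension $k-1$ and no component is contained in $H$; monotonicity of $\varphi$ under inclusion handles any lower-dimensional debris. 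The hockey-stick summation then closes the induction.
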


The ``moreover'' part of our Theorem~\ref{twisted} relies on the fact below (Theorem~2.1(b) in \cite{debarre1998vari}).
\begin{theorem}\label{fano}
    A variety $V \subset \mathbb{P}^n$ defined by polynomials of degree $d_1,d_2,\dots,d_s$ must contain a $k$-plane if $n \geq 2k + s$ and \begin{equation*}
        (k+1)(n-k) \geq \sum_{i=1}^s \binom{d_i+k}{k}.
    \end{equation*}
\end{theorem}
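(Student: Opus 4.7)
The approach I would take is to realize the Fano scheme $F_k(V)$ of $k$-planes contained in $V = V(f_1,\dots,f_s)$ as the zero locus of a natural section of a vector bundle on the Grassmannian $G = G(k+1,n+1)$. Let $\mathcal{S}$ denote the tautological rank-$(k+1)$ subbundle on $G$. Any homogeneous polynomial $f\in P^n_d$ restricts fiberwise to yield a global section of $\operatorname{Sym}^d\mathcal{S}^\vee$ whose value at $[\Lambda]\in G$ is $f|_\Lambda$. Packaging the $f_i$'s gives a section $\sigma$ of the bundle $E := \bigoplus_{i=1}^s \operatorname{Sym}^{d_i}\mathcal{S}^\vee$, whose rank is exactly $\sum_i \binom{d_i+k}{k}$, and $F_k(V) = Z(\sigma)$ as schemes.

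The hypothesis $(k+1)(n-k) \geq \sum_i \binom{d_i+k}{k}$ reads as $\dim G \geq \operatorname{rk}(E)$, so the expected codimension of $Z(\sigma)$ is at most $\dim G$. To pass from ``generic $\sigma$ has nonempty zeros'' (which a straightforward incidence-variety dimension count on $G\times\prod_i P^n_{d_i}$ already gives) to ``every $\sigma$ has nonempty zeros,'' I would show that the top Chern class $c_{\operatorname{rk}(E)}(E)\in H^{*}(G,\mathbb{Q})$ is nonzero: if some $\sigma$ were nowhere vanishing, it would produce a nowhere-vanishing inclusion $\mathcal{O}_G\hookrightarrow E$, and the Whitney sum formula applied to the resulting short exact sequence would force $c_{\operatorname{rk}(E)}(E)=0$, a contradiction. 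Computing this class reduces, via the splitting principle, to a Schubert-calculus identity using that the Chern roots of $\operatorname{Sym}^{d}\mathcal{S}^\vee$ are the degree-$d$ monomials in the Chern roots of $\mathcal{S}^\vee$.

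The main obstacle is precisely this cohomological nonvanishing, and it is where I expect the extra hypothesis $n\geq 2k+s$ to enter. Without it one could imagine regimes where the relevant product of Schubert classes vanishes in $H^{*}(G,\mathbb{Q})$ for purely dimensional reasons. The condition $n\geq 2k+s$, equivalently $\dim V\geq 2k$, is the classical Barth-type threshold above which Fano schemes of complete intersections behave regularly; concretely, it guarantees that the ambient Grassmannian is large enough for the product of the $s$ Chern classes of symmetric powers to pair nontrivially with the fundamental class. I would follow the classical Schubert-calculus computation of Debarre--Manivel to extract this nonzero coefficient and close the argument.
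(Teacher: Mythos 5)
The paper does not prove this statement at all: Theorem~\ref{fano} is quoted verbatim as Theorem~2.1(b) of Debarre--Manivel \cite{debarre1998vari}, so there is no internal proof to compare against. Your outline does correctly identify the strategy used in that reference: realize $F_k(V)$ as the zero scheme of the section of $E=\bigoplus_i\operatorname{Sym}^{d_i}\mathcal{S}^\vee$ induced by $(f_1,\dots,f_s)$, note $\operatorname{rk}(E)=\sum_i\binom{d_i+k}{k}\le\dim G$, and conclude nonemptiness from $c_{\operatorname{rk}(E)}(E)\ne 0$. But as a proof this has a genuine gap, and it sits exactly where you locate "the main obstacle": the nonvanishing of the top Chern class is the entire mathematical content of the theorem, and you defer it to "the classical Schubert-calculus computation of Debarre--Manivel," i.e.\ to the very result being proved. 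This is not a routine verification one can wave at: $\mathcal{S}^\vee$ is globally generated and nef but not ample on $G(k+1,n+1)$ (its restriction to a line in $G$ is $\mathcal{O}^{k}\oplus\mathcal{O}(1)$), so Fulton--Lazarsfeld-type positivity does not apply off the shelf, and one must actually exhibit a positive coefficient in the Schubert expansion of $\prod_i c_{\text{top}}(\operatorname{Sym}^{d_i}\mathcal{S}^\vee)$. Your guess that the hypothesis $n\ge 2k+s$ enters precisely there is plausible but unverified in your write-up, which means the one hypothesis whose role you are asked to explain is the one you have not pinned down.

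Two smaller points. First, your parenthetical claim that a "straightforward incidence-variety dimension count on $G\times\prod_i P^n_{d_i}$ already gives" nonemptiness for generic $\sigma$ is not correct as stated: the incidence variety is irreducible of dimension $\dim\bigl(\prod_i P^n_{d_i}\bigr)+\delta$ with $\delta\ge 0$, but this does not force the projection to the space of tuples to be dominant (the image could be a proper subvariety with positive-dimensional fibres), so even generic nonemptiness already requires the Chern class input. Second, the paper works over an arbitrary algebraically closed field $\mathbb{F}$, so the argument should be phrased in the Chow ring $A^*(G)$ rather than $H^*(G,\mathbb{Q})$; this is cosmetic, since the vanishing-locus-versus-top-Chern-class mechanism and the Whitney formula work verbatim there.
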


We shall use Chow varieties to enumerate subvarieties in a projective space of given dimension and degree. An \textit{effective algebraic $k$-cycle} $X = \sum_i m_iV_i$ is a formal sum of $k$-dimensional irreducible subvarieties $V_i$ in $\mathbb{P}^n$ with $m_i$ being positive integers. The \textit{degree} $\deg(X)$ is defined as $\sum_i m_i\deg(V_i)$, and the \textit{support} $|X|$ is defined as the variety $\cup_i V_i$. The \textit{Chow variety} $\text{Ch}(d,k,n)$ is a projective variety whose points are in one-to-one correspondence with effective algebraic $k$-cycles of degree $d$ in $\mathbb{P}^n$. We refer our readers to \cite{gelfand1994discriminants} for an introduction to its theory (and Section~I.9 in \cite{samuel1967} for a treatment over fields of arbitrary characteristic). The dimensions of Chow varieties are determined by Azcue~\cite{azcue1992dimension} and Lehmann~\cite{lehmann2017asymptotic} independently.
\begin{theorem}\label{chow}
    $\dim \text{Ch}(d,k,n) =\max \left\{ d(k+1)(n-k), \binom{d+k+1}{k+1}-1 + (k+2)(n-k-1) \right\}$.
\end{theorem}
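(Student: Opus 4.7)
The plan is to establish matching lower and upper bounds on $\dim \mathrm{Ch}(d,k,n)$. For the lower bound, I would exhibit two natural families of effective $k$-cycles of degree $d$ and compute their parameter dimensions. The first family consists of cycles of the form $\Lambda_1 + \Lambda_2 + \dots + \Lambda_d$, where each $\Lambda_i$ is a $k$-plane in $\mathbb{P}^n$; since the Grassmannian of $k$-planes has dimension $(k+1)(n-k)$, taking the $d$ planes to vary generically (so that the map to the symmetric product, and hence to the Chow variety, is generically injective) yields a $d(k+1)(n-k)$-dimensional subfamily of $\mathrm{Ch}(d,k,n)$. The second family consists of cycles that are degree $d$ hypersurfaces inside a $(k+1)$-plane $\Pi \subset \mathbb{P}^n$; the choice of $\Pi$ contributes $(k+2)(n-k-1)$ parameters, and a degree $d$ hypersurface in $\Pi \cong \mathbb{P}^{k+1}$ is parameterized by $\mathbb{P}^{\binom{d+k+1}{k+1}-1}$. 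Each of these constructions injects into $\mathrm{Ch}(d,k,n)$, giving the lower bound $\dim \mathrm{Ch}(d,k,n) \geq \max\{d(k+1)(n-k),\ \binom{d+k+1}{k+1}-1+(k+2)(n-k-1)\}$.

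For the upper bound, I would stratify $\mathrm{Ch}(d,k,n)$ according to the structure of a generic cycle $X=\sum m_i V_i$ in an irreducible component. \textbf{Case A:} every irreducible component $V_i$ is a $k$-plane. Then $X$ corresponds to an unordered tuple of $k$-planes with multiplicities summing to $d$, i.e. a point of the $d$-th symmetric product of the Grassmannian of $k$-planes in $\mathbb{P}^n$. The dimension of this stratum is at most $d(k+1)(n-k)$, with equality when all multiplicities equal $1$. \textbf{Case B:} some irreducible component $V_i$ has degree $e_i\geq 2$. Using the classical fact that an irreducible non-degenerate $k$-dimensional variety of degree $e$ in $\mathbb{P}^m$ satisfies $m\leq k+e-1$, such a component $V_i$ lies in its linear span $L_i$ of dimension at most $k+e_i-1$. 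The parameter count then splits into (i) the choice of $L_i$ inside $\mathbb{P}^n$, (ii) the deformation of $V_i$ inside $L_i$, bounded above by the dimension of the Hilbert/Chow scheme of degree $e_i$ cycles in a fixed projective space of dimension $\leq k+e_i-1$, and (iii) the deformation of the residual cycle of degree $d-e_i$, to which one can apply induction.

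The main technical obstacle lies in Case B: one needs to optimize the resulting expression over all admissible splittings of the degree $d$ into pieces $e_i$ and all admissible dimensions $m_i$ for the spans, and show the maximum is attained precisely by the two families already constructed. A convexity or rearrangement argument, together with the observation that stretching the span of a single irreducible component by one unit (passing from a $k$-plane component to a degree-$d$ hypersurface in a $(k+1)$-plane) is strictly better than splitting degree among several non-linear components, localizes the maximizer. Combining this with the base cases $d=1$ (where $\mathrm{Ch}(1,k,n)$ is the Grassmannian of $k$-planes) and small $e_i$ completes the induction and yields the matching upper bound.
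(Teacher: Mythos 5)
The paper itself does not prove this statement: Theorem~\ref{chow} is quoted as a known result, due independently to Azcue and to Lehmann, so there is no internal proof to compare yours against. Your lower bound is correct and is the standard one: generic unions of $d$ distinct $k$-planes give the first term, and degree-$d$ hypersurfaces inside a varying $(k+1)$-plane give the second.

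The upper bound, however, has a genuine gap, and it sits exactly where the real content of the theorem lies. In Case B, step (ii) bounds ``the deformation of $V_i$ inside $L_i$'' by ``the dimension of the Hilbert/Chow scheme of degree $e_i$ cycles'' in a projective space of dimension at most $k+e_i-1$ --- but that is the very quantity the theorem is computing, so as written the induction is circular (and the Hilbert scheme is not a safe substitute, since its components can exceed the dimension of the corresponding Chow stratum). What is actually needed is an independent upper bound on the dimension of the family of \emph{irreducible}, non-degenerate $k$-dimensional varieties of fixed degree $e$ in $\mathbb{P}^m$; in Lehmann's argument this is the main technical theorem and requires the classification of varieties of minimal degree together with an induction via hyperplane sections and projections, not merely the inequality $m\le k+e-1$. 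Similarly, the concluding ``convexity or rearrangement argument'' showing that the maximum over all splittings of $d$ into components with chosen multiplicities and span dimensions is attained by the two named families is asserted rather than carried out; since the two terms in the maximum trade off as $d$, $k$, $n$ vary, this comparison is delicate and is precisely the bulk of the work. As it stands, your proposal establishes only the inequality $\dim \mathrm{Ch}(d,k,n)\ge\max\{d(k+1)(n-k),\ \binom{d+k+1}{k+1}-1+(k+2)(n-k-1)\}$.
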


We will need the following statement for a minor technicality.
\begin{proposition}\label{fineorcoarse}
    The set of pairs $(p,X) \in \mathbb{P}^n \times \text{Ch}(d,k,n)$ satisfying $p \in |X|$ is Zariski closed in the variety $\mathbb{P}^n \times \text{Ch}(d,k,n)$.
\end{proposition}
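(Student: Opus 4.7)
The plan is to characterize the condition $p \in |X|$ via the \emph{Chow form} of $X$. Recall that a point $[X] \in \text{Ch}(d,k,n)$ corresponds, by construction and up to an overall scalar, to its Chow form $R_X = R_X(u_0, \ldots, u_k)$, a polynomial in $k+1$ groups of $n+1$ variables $u_i = (u_{i,0}, \ldots, u_{i,n})$ that is multi-homogeneous of degree $d$ in each group. Writing $H_u = \{x \in \mathbb{P}^n : \langle u, x \rangle = 0\}$ for the hyperplane defined by a nonzero $u$, the defining property of the Chow form is
\begin{equation*}
    R_X(u_0, \ldots, u_k) = 0 \iff H_{u_0} \cap \cdots \cap H_{u_k} \cap |X| \neq \emptyset.
\end{equation*}

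The first step is to prove the characterization
\begin{equation*}
    p \in |X| \iff R_X|_{W_p} \equiv 0, \text{ where } W_p := \{(u_0, \ldots, u_k) : \langle u_i, p \rangle = 0 \text{ for all } i\}.
\end{equation*}
The forward direction is immediate: any tuple in $W_p$ gives $k+1$ hyperplanes passing through $p \in |X|$, so their common intersection meets $|X|$. For the converse, assume $p \notin |X|$. Projecting $|X|$ from $p$ into $\mathbb{P}^{n-1}$ yields a variety of dimension at most $k$, and by dimension count a generic $(n-k-2)$-plane in $\mathbb{P}^{n-1}$ avoids this image. The cone over such a plane from $p$ is an $(n-k-1)$-plane $\Lambda$ through $p$ with $\Lambda \cap |X| = \emptyset$, and writing $\Lambda = H_{u_0} \cap \cdots \cap H_{u_k}$ with each $u_i \in W_p$ exhibits a tuple of $W_p$ on which $R_X$ does not vanish.

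The second step is to show that the condition $R_X|_{W_p} \equiv 0$ cuts out a Zariski closed subset of $\mathbb{P}^n \times \text{Ch}(d,k,n)$. Working in a standard affine chart $p_0 \neq 0$, the subspace $W_p$ is linearly parameterized via $u_{i,0} = -\sum_{j=1}^n (p_j / p_0)\, v_{i,j}$ and $u_{i,j} = v_{i,j}$ for $j \geq 1$, with $(v_{i,j})$ free parameters. Substituting into $R_X$ and multiplying by $p_0^{d(k+1)}$ to clear denominators yields a polynomial $\hat{R}(p, [X]; v)$ whose coefficients in $v$ are algebraic in $(p, [X])$. The identical vanishing of $\hat{R}$ in $v$ amounts to the simultaneous vanishing of these coefficients, a closed condition; the same reasoning applied in the remaining charts of $\mathbb{P}^n$ concludes the argument.

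The main subtlety I anticipate is the usual one when invoking Chow forms: $R_X$ is only defined up to a global scalar, so rigorously $R_X$ is a section of a line bundle on $\text{Ch}(d,k,n)$ rather than an honest polynomial function. This causes no issue for checking closedness, which is a local condition: on any affine chart of $\text{Ch}(d,k,n)$ where a chosen generator of that line bundle is regular and nonvanishing, the above substitution argument goes through verbatim. All the nontrivial geometric content is confined to the dimension count in step one.
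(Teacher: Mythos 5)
Your proof is correct. It shares with the paper's argument the two essential inputs: the Chow form $R_X$ and the geometric equivalence that $p \in |X|$ if and only if every $(n-k-1)$-plane through $p$ meets $|X|$ --- whose nontrivial direction is exactly your projection-from-$p$ dimension count (a step the paper leaves implicit in the sentence ``it is easy to argue\dots''). Where the two arguments part ways is in how closedness is extracted from this equivalence. The paper forms the incidence variety $\mathcal{T} \subset \mathbb{P}^n \times \text{Ch}(d,k,n) \times \text{Gr}(n-k,n+1)$ of triples $(p,X,F)$ with $p \in F$ and $F \cap |X| \neq \emptyset$, characterizes $p \in |X|$ by the fibre of $\mathcal{T} \to \mathbb{P}^n \times \text{Ch}(d,k,n)$ attaining the maximal dimension $\dim \text{Gr}(n-k-1,n)$, and then invokes upper semicontinuity of fibre dimension (Lemma~\ref{fiberdimension}). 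You instead restrict $R_X$ to the linearly varying subspace $W_p$ of hyperplane tuples through $p$ and observe that identical vanishing of a polynomial whose coefficients depend polynomially on $(p,[X])$ is a coefficient-vanishing, hence closed, condition. Your route is more hands-on and avoids the fibre-dimension lemma entirely, at the cost of chart-by-chart bookkeeping and of the scalar ambiguity of $R_X$, which you correctly flag and dispose of (equivalently, one can note that the coefficients of $\hat{R}$ are bihomogeneous in $p$ and in the homogeneous coordinates of the Chow point in $\mathbb{P}(S(d,k,n))$, so they cut out a closed subset globally); the paper's route outsources that bookkeeping to standard semicontinuity. Both arguments are complete at the paper's level of rigor.
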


Due to a lack of reference, we give a brief explanation of this fact, and we shall follow the treatment in \cite{gelfand1994discriminants}. The \textit{Grassmannian} $\text{Gr}(k,n)$ is an irreducible projective variety whose points are in one-to-one correspondence with $k$-dimensional linear subspaces in $\mathbb{F}^n$. In particular, $\text{Gr}(k+1,n+1)$ parametrizes all $k$-planes in $\mathbb{P}^n$. The Pl\"ucker map embeds $\text{Gr}(k,n)$ into a larger projective space $\mathbb{P}^m$ (see \cite{griffiths2014principles}). Given a $k$-dimensional irreducible subvariety $V \subset \mathbb{P}^n$ of degree $d$, the set of $(n-k-1)$-planes in $\mathbb{P}^n$ that has non-empty intersection with $V$ is a subvariety of $\text{Gr}(n-k,n+1)$, which is also the zero locus of some element $R_V \in S(d,k,n)$. Here, $S(d,k,n)$ is a vector space consisting of a zero element and all restrictions of degree-$d$ homogeneous polynomials from $\mathbb{P}^m$ onto $\text{Gr}(n-k,n+1)$ under the Pl\"ucker embedding. For a $k$-cycle $X = \sum_i m_iV_i$ of degree $d$, we define its \textit{Chow form} as $R_X = \prod R_{V_i}^{m_i}$, which is unique up to scaling. Hence, we can regard $R_X$ as an element in the projectivization $\mathbb{P}(S(d,k,n))$, and the map $X \mapsto R_X$ is proven (in \cite{chow1937algebraischen}) to be a closed embedding of variety $\text{Ch}(d,k,n) \to \mathbb{P}(S(d,k,n))$.

To prove Proposition~\ref{fineorcoarse}, we first notice that the set of pairs $(f,F)$ satisfying $f(F) = 0$ is closed in $\mathbb{P}(S(d,k,n)) \times \text{Gr}(n-k,n+1)$. Together with the fact that $\text{Ch}(d,k,n)$ is closed in $\mathbb{P}(S(d,k,n))$, this implies the set of pairs $(X,F)$ satisfying $R_X(F) = 0$ is closed in $\text{Ch}(d,k,n) \times \text{Gr}(n-k,n+1)$. By the definition of Chow forms, this means the set of pairs $(X,F)$ satisfying $|X| \cap F \neq \emptyset$ is closed in $\text{Ch}(d,k,n) \times \text{Gr}(n-k,n+1)$. On the other hand, the set of pairs $(p,F)$ satisfying $p \in F$ is known to be closed in $\mathbb{P}^n \times \text{Gr}(n-k,n+1)$, see Section~I.5 ``Universal Bundles'' in \cite{griffiths2014principles}. Hence, the set $\mathcal{T}$ of triples $(p,X,F)$ satisfying $p\in F$ and $|X| \cap F \neq \emptyset$ is closed in $\mathbb{P}^n \times \text{Ch}(d,k,n) \times \text{Gr}(n-k,n+1)$. We also notice that the collection of $(n-k-1)$-planes containing any fixed point in $\mathbb{P}^n$ forms an irreducible subvariety in $\text{Gr}(n-k,n+1)$ that is isomorphic to $\text{Gr}(n-k-1,n)$. Now, we consider the canonical projection $\pi: \mathcal{T} \to \mathbb{P}^n \times \text{Ch}(d,k,n)$. It is easy to argue that the pairs $(p,X)$ satisfying $p \in |X|$ are exactly those satisfying $\dim\left( \pi^{-1}(p,X) \right) \geq \dim \text{Gr}(n-k-1,n)$. Then the well-known fact below asserts that a set written in this form is closed (see Exercise~II.3.22 in \cite{hartshorne}). 

\begin{lemma}\label{fiberdimension}
    Let $f:X\to Y$ be a surjective morphism of projective varieties. For any integer $h$, let $C_h$ be the set of points $y\in Y$ such that the fibre $f^{-1}(y)$ has dimension at least $h$, then $C_h$ is closed. Moreover, if both $X$ and $Y$ are irreducible, $C_e = Y$ for $e = \dim X-\dim Y$.
\end{lemma}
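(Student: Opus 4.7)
The plan is to first reduce to the case where both $X$ and $Y$ are irreducible, then prove the ``moreover'' lower bound on fiber dimension via Krull's height theorem, and finally deduce the closedness of $C_h$ by Noetherian induction on $\dim Y$. For the reduction, decompose $X = X_1 \cup \cdots \cup X_s$ into irreducible components. Since $X$ is projective, each image $Y_i := f(X_i)$ is closed and irreducible in $Y$, and $f|_{X_i}: X_i \to Y_i$ is surjective. For any $y \in Y$, we have $f^{-1}(y) = \bigcup_i (f|_{X_i})^{-1}(y)$, so $\dim f^{-1}(y) = \max_i \dim (f|_{X_i})^{-1}(y)$. Consequently $C_h(f) = \bigcup_i C_h(f|_{X_i})$, and each $C_h(f|_{X_i})$ being closed in $Y_i$ (by the irreducible case) implies it is closed in $Y$. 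This reduces both claims to the irreducible case.

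For the ``moreover'' statement with $X, Y$ irreducible and $e = \dim X - \dim Y$, I would fix a closed point $y \in Y$ and choose an affine open $V \cong \operatorname{Spec}(A) \subset Y$ containing $y$. The local ring of $Y$ at $y$ has Krull dimension $\dim Y$, so after possibly shrinking $V$, the maximal ideal at $y$ is the radical of an ideal generated by $\dim Y$ elements $a_1, \ldots, a_{\dim Y} \in A$. Pulling back via $f$, the fiber $f^{-1}(y) \cap f^{-1}(V)$ is cut out by the $\dim Y$ equations $f^*a_i$ in $f^{-1}(V)$. Krull's principal ideal theorem then forces every irreducible component of this (non-empty, by surjectivity) subscheme to have codimension at most $\dim Y$ in $f^{-1}(V)$, hence dimension at least $e$, proving $C_e = Y$.

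For the closedness of $C_h$ in the irreducible case, proceed by induction on $\dim Y$. When $h \leq e$, we already have $C_h = Y$. When $h > e$, invoke the generic fiber dimension result: there is a non-empty open $U \subset Y$ such that $\dim f^{-1}(y) = e$ for every $y \in U$. This follows by applying Noether normalization to a dense open affine of $X$ to factor $f$, over a dense open of $Y$, through a finite map to a product $\mathbb{A}^e \times V$, whose fibers are finite. Thus $C_h$ is contained in the proper closed subset $Z := Y \setminus U$, and we may apply the inductive hypothesis to the restriction $f^{-1}(Z) \to Z$ (after decomposing into irreducible components and noting $\dim Z < \dim Y$) to conclude that $C_h$ is closed.

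The main obstacle will be establishing the generic-fiber-dimension step used in the inductive part, which relies on the transcendence-degree interpretation of dimension and a careful Noether-normalization-style factorization of $f$ over a dense open of $Y$. All other steps are routine applications of Krull dimension theory and Noetherian induction, so this is precisely the content of Hartshorne's Exercise~II.3.22 and could alternatively be invoked directly.
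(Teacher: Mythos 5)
Your proposal is correct, but note that the paper does not actually prove this lemma at all: it is quoted as a ``well-known fact'' with a pointer to Exercise~II.3.22 in Hartshorne, exactly the citation you offer in your last sentence as the alternative. What you have written is the standard proof underlying that exercise, and it is sound: the reduction to irreducible $X$ and $Y$ via components, the lower bound $\dim f^{-1}(y)\ge e$ from a system of parameters at $y$ together with Krull's height theorem (using that $\dim Z+\operatorname{codim}(Z,X)=\dim X$ for irreducible closed subsets of an irreducible variety), and the Noetherian induction on $\dim Y$ combined with generic fibre dimension for the closedness of $C_h$. Two points are worth making explicit if you write this up in full. First, projectivity (properness) is genuinely used and not just decorative: it is what makes the images $f(X_i)$ closed in the reduction step, both at the outset and again when you restrict to $f^{-1}(Z)\to Z$ inside the induction; for general morphisms of varieties, fibre dimension is only upper semicontinuous on the source, not the target. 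Second, in the generic-fibre step the Noether normalization is performed on a dense affine open $X_0\subset X$, so fibres of $f$ over $U$ may still have components lying in $X\setminus X_0$; one must shrink $U$ further (or recurse on the components of $X\setminus X_0$ that dominate $Y$) to control these. With those caveats the argument is complete; alternatively, as both you and the authors observe, one can simply invoke the reference.
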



\section{Proofs of Theorems~\ref{evasive} and~\ref{twisted}}\label{sec:boris}

Our proofs are inspired by a random-algebraic construction of Boris Bukh \cite{bukh2024extremal}. However, we employ a dimension-counting argument that avoids probabilistic notions and works over algebraically closed fields (an assumption for many results from algebraic geometry).

\begin{proof}[Proof of Theorem~\ref{twisted}]
    For positive integers $d_1,\dots,d_k$ to be determined later, we define \begin{equation*}
        \mathcal{B} = \{ (f_1,\ldots,f_k,X)\in P^{n}_{d_1}\times\dots\times P^n_{d_k}\times \text{Ch}(d,k,n) :~ \dim ( Z(f_1,\dots,f_k) \cap |X| )>0 \}.
    \end{equation*} We check that $\mathcal{B}$ is an algebraic variety. Indeed, we consider the auxiliary set \begin{equation*}
        \mathcal{B}' = \{ (f_1,\ldots,f_k,p,X)\in P^{n}_{d_1}\times\dots\times P^n_{d_k} \times \mathbb{P}^n \times \text{Ch}(d,k,n) :~ f_1(p)=\dots=f_k(p)=0,~ p \in |X| \}.
    \end{equation*} It follows from Proposition~\ref{fineorcoarse} that $\mathcal{B}'$ is a variety. Using the canonical projection $\pi: \mathcal{B}' \to P^{n}_{d_1}\times\dots\times P^n_{d_k}\times \text{Ch}(d,k,n)$, we see $\mathcal{B}$ is exactly the points in $P^{n}_{d_1}\times\dots\times P^n_{d_k}\times \text{Ch}(d,k,n)$ whose fibres under $\pi$ have positive dimension, so $\mathcal{B}$ is a variety by Lemma~\ref{fiberdimension}.

    Next, we consider canonical projections $\pi_1: \mathcal{B} \to P^{n}_{d_1}\times\dots\times P^n_{d_k}$ and $\pi_2: \mathcal{B} \to \text{Ch}(d,k,n)$. For an arbitrary
    $X \in \text{Ch}(d,k,n)$, we shall prove the following bound on its fibre dimension \begin{equation}\label{fiber}
        \dim \left(\pi_2^{-1}(X)\right) \leq \dim\left( P^{n}_{d_1}\times\dots\times P^n_{d_k} \right) - \min_{1\leq i\leq k} \binom{d_i+k+1-i}{k+1-i}.
    \end{equation} Notice that Theorem~\ref{chow} says $\dim \text{Ch}(d,k,n) = O(n)$ as $n\to \infty$. By taking $d_i = \Omega\left(n^{1/(k+1-i)}\right)$, we can guarantee \begin{equation}\label{choice}
        \min_{1\leq i\leq k} \binom{d_i+k+1-i}{k+1-i} > \dim \text{Ch}(d,k,n).
    \end{equation} Let $\mathcal{B}_0$ be an irreducible component of $\mathcal{B}$ with maximum dimension. It is elementary that $\pi_2(\mathcal{B}_0)$ is also irreducible in $\text{Ch}(d,k,n)$. We apply the ``moreover'' part of Lemma~\ref{fiberdimension} to the restriction of $\pi_2$ on $\mathcal{B}_0$, Together with \eqref{fiber} and \eqref{choice}, we have, for any $X\in \pi_2(\mathcal{B}_0)$, \begin{align*}
        \dim(\mathcal{B}) = \dim(\mathcal{B}_0) &\leq \dim (\pi_2^{-1}(X))+\dim (\pi_2(\mathcal{B}_0))\\
                        &\leq \dim\left( P^{n}_{d_1}\times\dots\times P^n_{d_k} \right) - \min_{1\leq i\leq k} \binom{d_i+k+1-i}{k+1-i} + \dim (\text{Ch}(d,k,n))\\
                        &< \dim\left( P^{n}_{d_1}\times\dots\times P^n_{d_k} \right).
    \end{align*} Hence, we have $\dim\left(\pi_1(\mathcal{B})\right) < \dim ( P^{n}_{d_1}\times\dots\times P^n_{d_k} )$. Together with Fact~\ref{reduced}, there must exist polynomials $f_i$ of degrees $d_i$ such that $\dim ( Z(f_1,\dots,f_k) \cap |X| ) = 0$ for all $X \in \text{Ch}(d,k,n)$ and the subscheme $S$ defined by $f_1,\dots,f_k$ in $\mathbb{P}^n$ is reduced. By the former condition, we must have $\dim S = n - k$. By the latter condition, $S$ is also an abstract variety, hence $Z(f_1,\dots,f_k) = S$ is a complete intersection variety.
    
    Since any variety of dimension $k$ and degree at most $d$ is contained in the support $|X|$ of some cycle $X \in \text{Ch}(d,k,n)$, the variety $Z(f_1,\dots,f_k)$ must be $d$-twisted. Furthermore, it has degree \begin{equation*}
        d_1 \cdot d_2 \cdots \cdot d_k = O\left(n^{\frac{1}{k} + \frac{1}{k-1} + \dots + \frac{1}{2} + 1}\right),
    \end{equation*} where the implicit constant depends only on $d$ and $k$.

    For the ``moreover'' part, suppose $V = Z(f_1,\dots,f_k)$ is a complete intersection variety that is $d$-twisted. In particular, $V$ is 1-twisted. Let $d_i$ denote the degree of $f_i$ and assume without loss of generality that $d_1 \leq d_2 \leq \dots \leq d_k$. Notice that $Z(f_1,\dots,f_i)$ cannot contain a $(k+1-i)$-plane $F$ for all $1\leq i\leq k$. Indeed, otherwise, let $H$ be any hyperplane containing $F$, then $H\cap V$ contains the algebraic set $F \cap Z(f_{i+1},\dots,f_k)$, which has a positive dimension, contradicting the fact that $V$ is 1-twisted. By Theorem~\ref{fano} applied to the variety $Z(f_1,\dots,f_i)$, we have for sufficiently large $n$, \begin{equation*}
        (k+2-i)(n-k-1+i) <\sum_{j=1}^i \binom{d_j+k+1-i}{k+1-i}.
    \end{equation*} In particular, $d_i=\max\{d_1,\dots,d_i\} = \Omega\left(n^{1/(k+1-i)}\right)$. Since the degree $\deg V$ equals the product $\prod_i d_i$ when $V$ is a complete intersection, we can conclude the proof.

    It suffices for us to prove \eqref{fiber}, which follows by induction on $\ell$ in the following statement.
    \begin{claim}
    For positive integers $\ell \leq k$, $d_1,\dots,d_\ell$, and $X \in \text{Ch}(d,k,n)$, the set $\mathcal{B}_X = \{ (f_1,\dots,f_\ell)\in P^{n}_{d_1}\times\dots\times P^n_{d_\ell} :~ \dim ( Z(f_1,\dots,f_\ell) \cap |X| )> k -\ell \}$ is a subvariety in $P^{n}_{d_1}\times\dots\times P^n_{d_\ell}$ of codimension at least \begin{equation*}
        \min_{1\leq i\leq \ell} \binom{d_i+k+1-i}{k+1-i}.
    \end{equation*}
    \end{claim}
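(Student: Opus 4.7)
My plan is to prove the claim by induction on $\ell$, with the Hilbert function estimate (Lemma~\ref{hilbert2}) doing the heavy lifting at each step and a routine fibre-dimension count combining the inductive pieces. Closedness of $\mathcal{B}_X$ comes from a familiar incidence-variety trick: the set
\begin{equation*}
    \mathcal{I} = \{ (f_1,\dots,f_\ell,p) \in P^n_{d_1}\times\dots\times P^n_{d_\ell}\times |X| :~ f_1(p)=\dots=f_\ell(p)=0 \}
\end{equation*}
is manifestly Zariski closed, and $\mathcal{B}_X$ is precisely the locus where the fibre of the projection $\mathcal{I} \to P^n_{d_1}\times\dots\times P^n_{d_\ell}$ has dimension at least $k-\ell+1$, which is closed by Lemma~\ref{fiberdimension}.

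For the base case $\ell=1$, since $\dim |X|=k$, the condition $\dim(Z(f_1)\cap|X|)>k-1$ holds iff $f_1$ vanishes on some $k$-dimensional irreducible component $V$ of $|X|$, so $\mathcal{B}_X$ is the finite union $\bigcup_V I_{d_1}(V)$. By Lemma~\ref{hilbert2}, each $I_{d_1}(V)$ has codimension at least $\binom{d_1+k}{k}$ in $P^n_{d_1}$, matching $\min_{1\leq i\leq 1}\binom{d_i+k+1-i}{k+1-i}$.

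For the inductive step, let $A\subset P^n_{d_1}\times\dots\times P^n_{d_{\ell-1}}$ be the analogue of $\mathcal{B}_X$ one level down, that is, the set of $(f_1,\dots,f_{\ell-1})$ with $\dim(Z(f_1,\dots,f_{\ell-1})\cap|X|)>k-\ell+1$. I would partition $\mathcal{B}_X = (A\times P^n_{d_\ell}) \cup \mathcal{B}_X'$, where $\mathcal{B}_X' = \mathcal{B}_X\setminus (A\times P^n_{d_\ell})$. The induction hypothesis applied to $A$ immediately gives codimension at least $\min_{1\leq i\leq \ell-1}\binom{d_i+k+1-i}{k+1-i}$ on the first piece. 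For the second piece, any base point $(f_1,\dots,f_{\ell-1})\notin A$ in the projection of $\mathcal{B}_X'$ has $\dim W = k-\ell+1$ for $W := Z(f_1,\dots,f_{\ell-1})\cap|X|$, so the fibre in $P^n_{d_\ell}$ over such a base point consists of the $f_\ell$ vanishing on some $(k-\ell+1)$-dimensional irreducible component of $W$, which by Lemma~\ref{hilbert2} is a union of subvarieties of $P^n_{d_\ell}$ each of codimension at least $\binom{d_\ell+k+1-\ell}{k+1-\ell}$. The standard inequality $\dim \mathcal{B}_X' \leq \dim(P^n_{d_1}\times\dots\times P^n_{d_{\ell-1}}) + \max\dim\text{(fibre)}$ then yields codimension at least $\binom{d_\ell+k+1-\ell}{k+1-\ell}$ on the second piece, and the codimension of a union being the minimum finishes the induction.

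The main point that requires care is uniformity in the fibre-dimension step: one must check that the same codimension bound on the fibre in $P^n_{d_\ell}$ holds regardless of which base point $(f_1,\dots,f_{\ell-1})\notin A$ one picks, and regardless of which $(k-\ell+1)$-dimensional component $V_j$ of $W$ the polynomial $f_\ell$ happens to vanish on. This is precisely where Lemma~\ref{hilbert2} is convenient, since its bound depends only on the dimension of the relevant subvariety and not on its structure, giving the same uniform codimension $\binom{d_\ell+k+1-\ell}{k+1-\ell}$ on every fibre.
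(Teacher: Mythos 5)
Your proof is correct, and it uses the same toolkit as the paper (closedness via an incidence variety and Lemma~\ref{fiberdimension}, induction on $\ell$, a two-piece partition, Lemma~\ref{hilbert2} for the codimension of the vanishing locus, and a fibre-dimension count), but the induction is organized in mirror image. The paper peels off the \emph{first} polynomial: it splits according to whether $f_1$ vanishes on $|X|$ (giving codimension $\binom{d_1+k}{k}$ in the $f_1$-coordinate) or cuts $|X|$ down to dimension $k-1$, and then applies the inductive hypothesis to $(f_2,\dots,f_\ell)$ against the \emph{new} variety $Z(f_1)\cap|X|$; this requires first reducing to $|X|$ irreducible and, implicitly, quantifying the claim over cycles of every dimension and degree, since the ambient variety changes at each step. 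You instead peel off the \emph{last} polynomial: the piece $A\times P^n_{d_\ell}$ is handled by the inductive hypothesis with $X$ unchanged, and Lemma~\ref{hilbert2} enters only in the fibre over a base point where $W=Z(f_1,\dots,f_{\ell-1})\cap|X|$ has dimension exactly $k-\ell+1$. Your arrangement keeps the inducted statement fixed except for $\ell$ and dispenses with the reduction to irreducible $|X|$, which is marginally cleaner; the paper's arrangement makes the role of each $\binom{d_i+k+1-i}{k+1-i}$ term slightly more visible, as the $i$-th polynomial is the one asked to vanish on an irreducible variety of dimension $k+1-i$. Your closing observation about uniformity of the fibre bound is exactly the right point to flag, and Lemma~\ref{hilbert2} indeed supplies it.
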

    
    We skip the argument that $\mathcal{B}_X$ is a variety since it is very similar to that for $\mathcal{B}$. We can assume $|X|$ is irreducible without loss of generality. Indeed, suppose the irreducible components of $|X|$ are $V_1,\dots, V_s$, if $\dim( Z(f_1,\ldots,f_\ell) \cap |X|)> k-\ell$, then $\dim( Z(f_1,\ldots,f_\ell) \cap V_i)>k-\ell$ for some $i$. This means $\mathcal{B}_X \subset \mathcal{B}_{V_1}\cup\dots\cup \mathcal{B}_{V_s}$, thus $\dim (\mathcal{B}_X)  \leq \max_i \dim \left(\mathcal{B}_{V_i}\right)$. Here, $V_i$ can be regarded as an effective algebraic cycle itself. For the base case when $\ell = 1$, $\mathcal{B}_X$ consists of all polynomials that vanish on $|X|$ (as it is irreducible) and the claim follows from \eqref{hilbert1} and Lemma~\ref{hilbert2}.
        
    For the inductive step, we consider the partition $\mathcal{B}_X = \mathcal{B}_1 \cup \mathcal{B}_2$ where $(f_1,\ldots,f_\ell)$ is in $\mathcal{B}_1$ if $\dim ( Z(f_1) \cap |X| ) = k$, and in $\mathcal{B}_2$ if $\dim ( Z(f_1) \cap |X| ) = k - 1$. Again, since $|X|$ is irreducible, $\dim ( Z(f_1) \cap |X| ) = k$ if and only if $f_1$ vanishes on $|X|$, and by \eqref{hilbert1} and Lemma~\ref{hilbert2}, these polynomials form a subvariety of codimension at least $\binom{d_1 + k}{k}$ in $P^n_{d_1}$. For any fixed $f_1$ with $\dim ( Z(f_1) \cap |X| ) = k - 1$, we can apply inductive hypothesis to the variety $Z(f_1) \cap |X|$ and deduce that the set $\{ (f_2,\dots,f_\ell):~ \dim ( Z(f_1,f_2,\dots,f_\ell) \cap |X| )> k -\ell \}$ is a subvariety in $P^{n}_{d_2}\times\dots\times P^n_{d_\ell}$ of codimension at least \begin{equation*}
            \min_{2\leq i\leq \ell} \binom{d_i+k+1-i}{k+1-i}.
    \end{equation*} Using these bounds on the dimensions of $\mathcal{B}_1$ and $\mathcal{B}_2$, we can upper bound $\dim \mathcal{B}_V$ and conclude the inductive step of our claim. 
\end{proof}

\begin{proof}[Proof of Theorem~\ref{evasive}]
    Let $\overline{\mathbb{F}}_q$ be the algebraic closure of $\mathbb{F}_q$. A tuple of polynomials $(f_1,\ldots,f_k)\in P^n_{d_1}\times\dots\times P^n_{d_k}$ over $\overline{\mathbb{F}}_q$ is said to be \textit{good} if $Z(f_1,\dots,f_k)$ is a $d$-twisted complete intersection variety in $\mathbb{P}^n(\overline{\mathbb{F}}_q)$. According to the proof of Theorem~\ref{twisted}, bad tuples form a subvariety of $P^n_{d_1}\times\dots\times P^n_{d_k}$ of smaller dimension for certain choices of $d_i$ such that \begin{equation*}
        d_1 \cdot d_2 \cdots \cdot d_k = c_{d,k} \cdot n^{\frac{1}{k} + \frac{1}{k-1} + \dots + \frac{1}{2} + 1}.
    \end{equation*} Hence, a simple application of the Schwartz--Zippel lemma \cite{DeMilloLipton1978,Zippel1979,Schwartz1980} shows that the number of bad tuples defined over $\mathbb{F}_q$ is $O(q^{N-1})$ as $q \to \infty$ where $N = \dim(P^n_{d_1}\times\dots\times P^n_{d_k})$. Since there are $\Omega(q^N)$ many tuples defined over $\mathbb{F}_q$, there exists a good tuple $(f_1,\ldots,f_k)$ defined over $\mathbb{F}_q$ for sufficiently large $q$. We let $V = Z(f_1,\dots,f_k)$ and denote its $\mathbb{F}_q$-points as $V(\mathbb{F}_q)$. Due to twisted-ness and Bézout's theorem, the intersection between $V$ and any variety, of dimension $k$ and degree at most $d$, has cardinality at most $r$, where $r = d\cdot\prod_id_i$. According to the Lang--Weil bound~\cite{lang1954number}, we have $|V(\mathbb{F}_q)| = (1 \pm o(1))q^{n-k}$. By covering $\mathbb{P}^n$ with $n+1$ affine spaces isomorphic to $\mathbb{F}_q^n$, the intersection of $V(\mathbb{F}_q)$ with some member in this affine cover gives us a $(d,k,r)$-evasive set as claimed.
\end{proof}

\section{Container-clique trees and Theorem~\ref{no-three-in-line}}\label{sec:cctree}

A \textit{container-clique tree} for a hypergraph $\mathcal{H}$ is a rooted tree $T$ such that \begin{enumerate}
    \item Every node $x$ of $T$ is labelled with a sequence $C^x_0, C^x_1, \dots, C^x_{\ell_x}$ of subsets of $V(\mathcal{H})$.
    \item Every vertex subset $C^x_i$ with $i \neq 0$ is a clique of $\mathcal{H}$.
    \item Every independent set $I$ of $\mathcal{H}$ is \textit{contained} in a leaf $x$ of $T$, that is \begin{equation*}
        I \subset C^x_0 \cup C^x_1 \cup \dots \cup C^x_{\ell_x}.
    \end{equation*}
\end{enumerate}

The common approach of the container method is an iterative application of the container lemma to construct a suitable tree such that each node is labelled by one vertex subset called a container. With the notion of container-clique trees, we can choose to apply the container lemma when there are few large cliques in the hypergraph considered while building the tree, or to collect many large cliques into node labels efficiently. The efficiency of the latter case stems from the fact that cliques and independent sets have small intersections. To demonstrate this idea, we give a new proof of Theorem~\ref{no-three-in-line} which is simpler than that in \cite{chen2023random}. The essential part of Theorem~\ref{no-three-in-line} is the following statement. 
\begin{proposition}\label{no-three-in-line_key}
For $p \geq \log^3 q /\sqrt{q}$, we have $\alpha(\mathbb{F}^2_q,p) \leq (1+o(1))pq$ with high probability.
\end{proposition}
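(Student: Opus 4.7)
My plan is to construct a container-clique tree for the $3$-uniform hypergraph $\mathcal{H}$ whose vertex set is $\mathbb{F}_q^2$ and whose edges are the collinear triples. Independent sets of $\mathcal{H}$ are exactly the collinear-triple-free sets, and any clique of $\mathcal{H}$ of size at least three is a subset of a single line. Consequently, for every independent set $I$ and every clique $C$ of $\mathcal{H}$, we have $|I \cap C| \leq 2$; this is the structural fact that makes clique labels cheap.

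I build the tree iteratively, starting at the root with $C_0 = \mathbb{F}_q^2$ and no clique labels, and fix a threshold $K$ to be tuned at the end. At a node $x$, if some line $L$ meets $C_0^x$ in at least $K$ points, I create a unique child (no branching) whose container is $C_0^x \setminus L$ and whose clique labels are those of $x$ together with the new label $L \cap C_0^x$. Otherwise every line meets $C_0^x$ in fewer than $K$ points, so the pair-codegree of $\mathcal{H}[C_0^x]$ is at most $K - 2$; in this clique-free regime I prove a balanced supersaturation estimate for collinear triples and invoke the hypergraph container lemma to produce a bounded-branching step that shrinks $|C_0^x|$ by a constant factor. I iterate until $|C_0^x| \leq (1+o(1))q$.

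For each leaf $x$ and each collinear-triple-free $I \subset \mathbf{S}_p$ contained in the labels of $x$, I bound
\[
|I \cap \mathbf{S}_p| \leq |C_0^x \cap \mathbf{S}_p| + \sum_{i=1}^{\ell_x} |I \cap C_i^x \cap \mathbf{S}_p|.
\]
Chernoff's inequality yields $|C_0^x \cap \mathbf{S}_p| \leq (1+o(1))pq$ for each fixed leaf with probability $1 - \exp(-\Omega(\varepsilon^2 pq))$. The number of leaves is $2^{O(\log^2 q)}$ because deterministic clique-extraction does not branch and each container-lemma invocation contributes only a polynomial factor across the $O(\log q)$ invocations along any root-to-leaf path. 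Since $\log^2 q = o(\sqrt{q} \log^3 q) = o(pq)$ when $p \geq \log^3 q / \sqrt{q}$, a union bound over the leaves handles the $C_0^x$-contribution.

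The main obstacle is controlling the clique-label contribution $\sum_{i=1}^{\ell_x} |I \cap C_i^x \cap \mathbf{S}_p|$. The crude bound $2 \ell_x$ combined with $\ell_x \leq q^2/K$ would demand $K = \omega(q/p)$, which can exceed $q$ in the low-$p$ end of the stated range and is unattainable from line sizes alone. The refined accounting must exploit the two-sided estimate $|I \cap C_i^x \cap \mathbf{S}_p| \leq \min(2,\, |C_i^x \cap \mathbf{S}_p|)$ together with the concentration of the binomials $|C_i^x \cap \mathbf{S}_p|$: if $K$ is chosen below $1/p$, then a typical clique label contributes at most $1$ to the sum, and only a controlled portion of the labels attain the bound $2$. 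Balancing this choice of $K$ against the supersaturation requirement needed for the container step to shrink $|C_0^x|$ all the way down to $(1+o(1))q$ is where I expect the main technical work of the proof to lie.
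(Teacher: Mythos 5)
Your architecture is exactly the paper's: a container-clique tree on the collinear-triple hypergraph, lines as cliques, the container lemma in the no-rich-line regime, then Chernoff plus a union bound over leaves. But two quantitative claims are wrong or missing, and the second is precisely the step you flag as open. First, the leaf count: a single invocation of the container lemma does not contribute ``only a polynomial factor.'' The family $\mathfrak{C}$ it produces has $\log|\mathfrak{C}| = O(\tau|V|\log(1/\tau))$, and with $|V|=|C_0^x|$ up to $q^2$ and $\tau \asymp \sqrt{q}/|C_0^x|$ (the value forced by the codegree condition $\Delta_2 \lesssim |C|/\sqrt{q}$ and the supersaturation $|E|\gtrsim |C|^3/q$), this is $2^{O(\sqrt{q}\log q)}$ per step, hence $2^{O(\sqrt{q}\log^2 q)}$ leaves. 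Your $2^{O(\log^2 q)}$ would prove the proposition for $p$ far below $q^{-1/2}$, contradicting the middle regime of Theorem~\ref{no-three-in-line}. The correct count still survives the union bound because $pq \geq \sqrt{q}\log^3 q \gg \sqrt{q}\log^2 q$ --- this is exactly where the hypothesis $p \geq \log^3 q/\sqrt{q}$ is consumed.

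Second, the clique-label contribution, which you leave as ``the main technical work,'' has a clean deterministic resolution that your fixed-threshold setup obstructs. The fix is to make the richness threshold relative: call $L$ rich when $|C\cap L| \geq |C_0^x|/\sqrt{q}$, and stop deleting as soon as half of $C_0^x$ is gone (creating a non-branching child in that event). Then each node operation extracts at most $\sqrt{q}$ cliques, and since the tree has height $O(\log q)$ (each step either halves $|C_0^x|$, drops it below $(1+\epsilon)q$, or cuts the edge count of $\mathcal{H}[C_0^x]$ by a constant factor via Lemma~\ref{HCL}(c) --- note the lemma shrinks edges, not vertices, so you need the supersaturation bound again to terminate), every label has length $\ell_x = O(\sqrt{q}\log q)$. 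The crude bound $\sum_i |I\cap C_i^x| \leq 2\ell_x = O(\sqrt{q}\log q) = o(pq)$ then suffices with no appeal to concentration of the $|C_i^x\cap\mathbf{S}_p|$; equivalently, $I$ lies in $C_0^x$ union $\ell_x$ sets of size at most $2$, a set of total size $(1+\epsilon)q + o(q)$, to which one applies Chernoff directly. Your proposed refined accounting via $\min(2,|C_i^x\cap\mathbf{S}_p|)$ and a tuned global $K$ is not developed enough to check and is unnecessary once the threshold scales with $|C_0^x|$.
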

\begin{proof}[Proof Sketch of Theorem~\ref{no-three-in-line}]
    In the first range, the upper bound for $\alpha(\mathbb{F}^2_q,p)$ comes from the expected size of a $p$-random set in $\mathbb{F}_q^2$. The lower bound is due to the fact that deleting a point from every collinear triple will not significantly reduce the size of a $p$-random set. In the third range, the upper bound for $\alpha(\mathbb{F}^2_q,p)$ is due to Proposition~\ref{no-three-in-line_key}. The lower bound is by considering the intersection of a $p$-random set with the moment curve $\{(x,x^2):~ x\in \mathbb{F}_q\}$, which is known to be collinear-triple-free. In the second range, we make use of the non-decreasing property of $\alpha(\mathbb{F}^2_q,p)$ as a function of $p$. The claimed bounds follow from those in the other two ranges. For more details, please consult the proof of Theorem~1.1 in \cite{chen2023random}.
\end{proof}

We now state the hypergraph container lemma in one of its forms. This well-known tool was initially developed by Balogh--Morris--Samotij~\cite{balogh2015independent} and Saxton--Thomason~\cite{saxton2015hypergraph} independently. The readers unfamiliar with basic notions of hypergraphs are referred to \cite{balogh2015independent,saxton2015hypergraph} as well. For an $r$-uniform hypergraph $\mathcal{H}$ whose vertex set is denoted as $V(\mathcal{H})$ and edge set is denoted as $E(\mathcal{H})$, we use $\Delta_i(\mathcal{H})$ to denote the maximum degree of any $i$-subset of $V(\mathcal{H})$, that is, the maximum number of members in $E(\mathcal{H})$ containing any given subset of $V(\mathcal{H})$ of cardinality $i$. An \textit{independent set} is a vertex subset of which every $r$-set is not an edge, and a \textit{clique} is a vertex subset of which every $r$-set is an edge. For $S \subset V(\mathcal{H})$, the \textit{induced sub-hypergraph} $\mathcal{H}[S]$ is the hypergraph whose vertex set is $S$ and edge set consists of all edges of $\mathcal{H}$ contained in $S$. The following lemma is Theorem 4.2 in~\cite{morris2016534}.

\begin{lemma}\label{HCL}
Let $r \ge 2$ be an integer and $c>0$ be sufficiently small with respect to $r$. If $\mathcal{H}=(V,E)$ is an $r$-uniform hypergraph and $0<\tau<1/2$ is a real number such that \begin{equation*}
        \Delta_i(\mathcal{H}) \leq c \cdot \tau^{i-1} \frac{|E|}{|V|} \quad\text{for all $2\leq i\leq r$,}
\end{equation*} then there exists a family $\mathfrak{C}$ of vertex subsets of $\mathcal{H}$ with the following properties:
\begin{itemize}
            \item[(a)] Every independent set of $\mathcal{H}$ is contained in some $C \in \mathfrak{C}$.
            \item[(b)] $\log |\mathfrak{C}| \leq c^{-1} \cdot \tau |V| \cdot \log(1/\tau)$.
            \item[(c)] For every $C \in \mathfrak{C}$, we have $|E(\mathcal{H}[C])| \leq (1 - c)|E|$.
\end{itemize}
\end{lemma}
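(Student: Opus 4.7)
The plan is to prove this lemma via the now-standard scythe-style iterative algorithm underlying the hypergraph container method of Balogh--Morris--Samotij~\cite{balogh2015independent} and Saxton--Thomason~\cite{saxton2015hypergraph}. The goal is to construct a deterministic map that sends each independent set $I \subseteq V$ of $\mathcal{H}$ to a pair $(S(I), C(S(I)))$, where the \emph{fingerprint} $S(I) \subseteq I$ is small and the \emph{container} $C(S) \supseteq I$ depends only on $S$, not on $I$. Setting $\mathfrak{C} = \{C(S) : S \subseteq V,\ |S| \leq s\}$ for a threshold $s = O(c^{-1}\tau|V|)$, property (a) holds by construction; property (b) follows from the counting bound $\log|\mathfrak{C}| \leq \log \binom{|V|}{\leq s} \leq s\log(e|V|/s) = O(c^{-1}\tau|V|\log(1/\tau))$; and property (c) will be the algorithm's halting criterion.

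The algorithm maintains a pair $(S, V^*)$, initially $(\emptyset, V)$, and iterates: (i) compute, using only $\mathcal{H}[V^*]$, a canonical linear order $\pi$ on $V^*$ that ranks vertices by their degrees in $\mathcal{H}[V^*]$ and in its higher link hypergraphs; (ii) let $v$ be the first vertex of $\pi$ lying in $I$, and add $v$ to $S$; (iii) delete from $V^*$ every vertex strictly preceding $v$ in $\pi$, together with $v$ itself. Crucially, the vertices removed in (iii) all lie outside $I$ (by choice of $v$), so the containment $I \subseteq S \cup V^*$ is preserved; and since $\pi$ is determined by $V^*$ alone, which is recursively determined by $S$, the whole procedure is reproducible from $S$. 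We terminate as soon as $|E(\mathcal{H}[V^*])| \leq (1-c)|E|$ and output $C(S) = S \cup V^*$.

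To analyse the algorithm, the degree hypotheses $\Delta_i(\mathcal{H}) \leq c\tau^{i-1}|E|/|V|$ are used to show that in any induced sub-hypergraph encountered during the process, a definite fraction of edges involves the top $O(\tau|V|)$ vertices of $\pi$. Quantitatively, each iteration deletes on the order of $c|E|/(c^{-1}\tau|V|)$ edges from $E(\mathcal{H}[V^*])$, so termination occurs after $O(c^{-1}\tau|V|)$ iterations, matching the fingerprint-size budget. Converting the multi-level degree control at $i=2,\dots,r$ into the required per-iteration edge-loss is achieved by a Kruskal--Katona-style averaging on the degree sequence along $\pi$, with the smallness of $c$ relative to $r$ absorbing the $r$-dependent combinatorial constants that arise.

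The main obstacle is balancing three intertwined requirements at every iteration: the ranking $\pi$ must be computable from $V^*$ alone, so that the decoder can reproduce it from $S$; the chosen vertex $v$ must license the deletion of the \emph{entire} prefix preceding it in $\pi$, not merely its own neighbourhood; and each such deletion must quantifiably reduce $|E(\mathcal{H}[V^*])|$. Reconciling these three requirements simultaneously against all $r$ degree conditions is exactly where the assumption that $c$ be sufficiently small in terms of $r$ is used, and it is the technical heart of the argument; once this is set up, the bound in (b) and the halting condition (c) fall out of the counting and potential-function estimates sketched above.
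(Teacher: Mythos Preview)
The paper does not prove this lemma at all: it is quoted verbatim as Theorem~4.2 of Morris--Saxton~\cite{morris2016534} and used as a black box. Your proposal, by contrast, sketches the standard scythe-style algorithm of Balogh--Morris--Samotij and Saxton--Thomason, which is indeed the underlying machinery behind that cited result. So there is nothing to compare against in this paper; your outline is the orthodox route to proving such a statement from scratch, and the high-level plan (fingerprint/container map, degree-ordered deletion, potential-function termination) is sound, though of course the details you defer---particularly the multi-level degree bookkeeping across $i=2,\dots,r$ that forces $c$ small in terms of $r$---are where all the work lies.
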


The proof of Proposition~\ref{no-three-in-line_key} relies on the following simple fact about collinear triples.
\begin{lemma}\label{no-three-in-line_supersaturation}
    For a point set $P$ in $\mathbb{F}_q^2$, the number of collinear triples in $P$ is at least \begin{equation*}
        \frac{1}{3}\cdot|P|(q+1)\cdot\binom{\frac{|P|-1}{q+1}}{2}.
    \end{equation*}
\end{lemma}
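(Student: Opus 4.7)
The plan is a standard double-counting argument on pairs of points lying on a common line through a fixed third point. First I would fix an arbitrary $p \in P$ and record that the affine plane $\mathbb{F}_q^2$ contains exactly $q+1$ lines through $p$ (equivalently, the $q^2-1$ other points of $\mathbb{F}_q^2$ split into $q+1$ classes of size $q-1$ according to direction). Writing $m_\ell$ for the number of points of $P \setminus \{p\}$ lying on the line $\ell$ through $p$, we have $\sum_\ell m_\ell = |P|-1$, so by convexity of $\binom{\cdot}{2}$ (Jensen's inequality applied to the $q+1$ summands),
\begin{equation*}
    \sum_{\ell \ni p} \binom{m_\ell}{2} \;\geq\; (q+1) \binom{\frac{|P|-1}{q+1}}{2}.
\end{equation*}

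The left-hand side counts the ordered/unordered pairs $\{a,b\} \subset P\setminus\{p\}$ such that $a,b,p$ are collinear, and this is exactly the number of collinear triples of $P$ containing $p$. Summing over all $p \in P$ counts each collinear triple $\{a,b,c\} \subset P$ three times (once with each of its points playing the role of $p$), so dividing by $3$ yields the claimed lower bound
\begin{equation*}
    \frac{1}{3}\cdot|P|(q+1)\cdot \binom{\frac{|P|-1}{q+1}}{2}.
\end{equation*}

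There is no real obstacle here; the only point to handle with a little care is the convexity step, which is harmless since $\binom{x}{2} = x(x-1)/2$ is a convex function on the reals and we are simply applying Jensen's inequality to a partition of $|P|-1$ into $q+1$ nonnegative parts. One should also note that $\binom{x}{2}$ is interpreted on the reals (so that the expression $\binom{(|P|-1)/(q+1)}{2}$ makes sense even when $q+1$ does not divide $|P|-1$), which is exactly the convention used implicitly in the statement.
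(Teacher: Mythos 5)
Your proposal is correct and follows essentially the same argument as the paper: fix a point, partition the remaining points of $P$ among the $q+1$ lines through it, apply Jensen's inequality to the convex function $\binom{x}{2}$, sum over all points, and divide by $3$ for the overcounting. The remark about interpreting $\binom{x}{2}$ over the reals is a fair and harmless clarification.
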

\begin{proof}
    For each $u \in P$, there are $q+1$ lines in $\mathbb{F}_q^2$ containing $u$. Denote the intersection between $P\setminus u$ and each of these lines by $L^u_i$ for $1\leq i\leq q+1$. For each pair $\{v, w\}$ in $L^u_i$, we count the collinear triple $\{u,v,w\}$. By Jensen's inequality, we estimate the count of collinear triples in this process \begin{equation*}
        \sum_{u\in P}\sum_{i=1}^{q+1} \binom{|L^u_i|}{2} \geq \sum_u (q+1) \cdot \binom{\frac{\sum_i |L^u_i|}{q+1}}{2} = |P| (q+1)\cdot\binom{\frac{|P|-1}{q+1}}{2}.
    \end{equation*} Each triple is counted at most three times, so we conclude the lemma.
\end{proof}

\begin{proof}[Proof of Proposition~\ref{no-three-in-line_key}]
    We fix an arbitrary $\epsilon > 0$ for the $o(1)$-term. Let $\mathcal{H}$ be the hypergraph with $V(\mathcal{H}) = \mathbb{F}^2_q$ and $E(\mathcal{H})$ consisting of all collinear triples. We build a container-clique tree $T$ for $\mathcal{H}$ through the following process. We start with $T$ having one root node labelled by $\mathbb{F}_q^2$; iteratively, if there is a node $x \in T$ with $|C^x_0| \geq (1+\epsilon)q$, we perform an operation on $x$ as described below; the process stops when no such node exists. We remark that a subset $P$ in $\mathbb{F}_q^2$ with $|P| \geq (1+\epsilon)q$ contains at least $\Omega(|P|^3/q)$ collinear triples by Lemma~\ref{no-three-in-line_supersaturation} when $q$ is sufficiently large, which is a condition we can assume as $q \to \infty$.

    Now, we describe the operation on a node $x$. We take $C = C^x_0$ first. If there exists a \textit{rich} line $L$ in $\mathbb{F}_q^2$, that means, $|C \cap L| \geq |C^x_0|/\sqrt{q}$, then we delete the subset $C \cap L$ from $C$. We repeat this deletion step until either (i) $|C| < \max\{|C^x_0|/2,(1+\epsilon)q\}$ or (ii) no rich line exists. We use $K_1,K_2,\dots,K_\ell$ to denote the deleted subsets. Note that every $K_i$ is a clique of $\mathcal{H}$. If the deletion steps ended with (i), then we create a child $y$ for $x$ with \begin{equation*}
        C^y_0 = C,~ C^y_1 = C^x_1, \dots, C^y_{\ell_x}=C^x_{\ell_x},~ C^y_{\ell_x+1} = K_1,\dots, C^y_{\ell_x+\ell} = K_\ell.
    \end{equation*} In other words, $y$ copies the label of $x$, but with $C^x_0$ replaced by $C$ and all $K_i$'s appended.
    
    If the deletion steps ended with (ii), we consider an auxiliary hypergraph $\mathcal{H}' = \mathcal{H}[C]$, that is, the sub-hypergraph induced on $C$. Notice that we have $|C| \geq \max\{|C^x_0|/2,(1+\epsilon)q\}$ in this case. Hence we can apply Lemma~\ref{no-three-in-line_supersaturation} to $C$ and conclude that $E(\mathcal{H}') \geq \Omega(|C|^3/q)$. On the other hand, we can upper bound \begin{equation*}
        \Delta_2(\mathcal{H}') \leq 2|C|/\sqrt{q} \qquad\text{and}\qquad \Delta_3(\mathcal{H}') \leq 1.
    \end{equation*} The first inequality above, crucially, follows from the absence of rich lines, and the second inequality is trivial. Thereafter, we apply Lemma~\ref{HCL} to $\mathcal{H}'$ with $\tau = c' \cdot \sqrt{q}/|C|$ to obtain a family $\mathfrak{C}$ of subsets of $C$. Here, $c'$ is a sufficiently large constant such that Lemma~\ref{HCL} can be applied with a sufficiently small $c$ as required there. For each $C' \in \mathfrak{C}$, we create a child $y$ for $x$ with $C^y_0 = C'$, $C^y_1 = C^x_1, \dots, C^y_{\ell_x}=C^x_{\ell_x}$, $C^y_{\ell_x+1} = K_1,\dots, C^y_{\ell_x+\ell} = K_\ell$. Thus, the node operation is described.

    We argue that every independent set of $\mathcal{H}$ is contained in a leaf of $T$. Since the root of $T$ contains all independent sets, it suffices for us to prove that if an independent set $I$ is contained in a non-leaf $x$, then one of its children $y$ must contain $I$. Indeed, this is easy to prove if the deletion steps of $x$ ended with (i). If the deletion steps of $x$ ended with (ii), then $I\cap C^x_0$ can be decomposed into $I\cap C$ and $I\cap K_i$ for $1\leq i \leq \ell$. Being an independent set of $\mathcal{H}'$, $I\cap C$ is contained in $C^y_0$ for some child $y$ of $x$ by Lemma~\ref{HCL}(a). Every set such as $I\cap K_i$ or $I\cap C^x_i$ with $i\neq 0$ are contained in some $C^y_j$. This means $y$ contains $I$. It is guaranteed throughout our process that $C^x_i$ is a clique for $x \in T$ and $i\neq 0$. Therefore, $T$ is a container-clique tree for $\mathcal{H}$.
    
    Next, we examine some quantitative properties of $T$. Firstly, each node operation either shrinks $|C^x_0|$ by a constant factor, or reduces $|C^x_0|$ to below $(1+\epsilon)q$, or shrinks $|E(\mathcal{H}[C^x_0])|$ by a constant factor according to Lemma~\ref{HCL}(c), so the height of $T$ is at most $O(\log q)$. Secondly, since each node operation extends the label by at most $\sqrt{q}$ additional cliques, every node has label length at most $O(\sqrt{q}\log q)$. By our choice of $\tau$ and Lemma~\ref{HCL}(b), the degree of a node (that is, the number of its children) is at most \begin{equation*}
         |\mathfrak{C}| \leq 2^{O\left(\tau |V| \cdot \log(1/\tau)\right)} = 2^{O\left(\sqrt{q}\log q\right)}.
    \end{equation*} Hence, $T$ has at most $2^{O\left(\sqrt{q}\log^2 q\right)}$ many leaves. Lastly, we have $|C^x_0| < (1+\epsilon)q$ for all leaf $x$.

    Finally, we estimate the probability that a $p$-random set $\mathbf{S}_p \subset \mathbb{F}_q^2$ contains a collinear-triple-free set of size $m$, which will also be an independent set in $\mathcal{H}$. Inside $T$, we denote the number of leaves as $\nu$, the maximum size of $C^x_0$ for leaf $x$ as $\chi$, the maximum size of $C^x_i$ for $i\neq 0$ as $\kappa$, and the maximum label length $\ell_x$ as $\lambda$. If $\mathbf{S}_p$ contains an independent set $I$, since $I$ is contained in some leaf $x \in T$ and the intersection between an independent set and a clique is less than the uniformity $r = 3$, we must have $I \subset C^x_0\cup Y_1\cup \dots\cup Y_{\ell_x}$ for some $Y_i \subset C^x_i$  with $|Y_i| = 3$ for $1\leq i\leq \ell_x$. The number of choices of such $(x,Y_1,\dots,Y_{\ell_x})$ is at most $\nu\binom{\kappa}{r}^{\lambda}\leq 2^{O\left(\sqrt{q}\log^2 q\right)}$.
    
    For a fixed choice $(x,Y_1,\dots,Y_{\ell_x})$, we can estimate the expectation below \begin{equation*}
        \mathbb{E}(|\mathbf{S}_p \cap C^x_0\cup Y_1\cup \dots\cup Y_{\ell_x}|) \leq p(\chi+3\lambda)\le p\cdot \left((1+\epsilon)q+O(\sqrt{q}\log q)\right)\le (1+2\epsilon)pq.
    \end{equation*} Then we can apply Chernoff's bound (see Lemma~5 in \cite{balogh2025maximum}) with $m = (1+3\epsilon)pq$ to show \begin{equation*}
        \Pr[|\mathbf{S}_p \cap C^x_0\cup Y_1\cup \dots\cup Y_{\ell_x}| \geq m] \leq 2^{-cpq},
    \end{equation*} where the constant $c > 0$ depends only on $\epsilon$. Hence, we can apply the union bound with $p \geq \log^3 q /\sqrt{q}$ to conclude \begin{align*}
        \Pr[\alpha(\mathbb{F}^2_q,p)\geq m] &\leq \sum_{x,Y_1,\dots,Y_{\ell_x}}\Pr[|\mathbf{S}_p \cap C^x_0\cup Y_1\cup \dots\cup Y_{\ell_x}| \geq m]\\
        &\leq 2^{O\left(\sqrt{q}\log^2 q\right)}/2^{cpq}\\
        &= o(1).
    \end{align*} Therefore, we conclude the proof since $\epsilon$ is chosen arbitrarily.
\end{proof}

\section{Proof of Theorem~\ref{counting}}\label{sec:counting}

In this section, a \textit{$k$-flat} refers to a $k$-dimensional affine subspace inside another affine space of higher dimension. A set of $r$ points that is contained in a $k$-flat will be called a \textit{$(k,r)$-set}. Besides the idea of container-clique trees, another ingredient of our Theorem~\ref{counting} is a supersaturation lemma inspired by the recent paper of Balogh and Luo~\cite{balogh2025maximum}.

\begin{lemma}\label{counting_supersaturation}
    For positive integers $k < n, r$ and real number $c > 0$, there exists a sufficiently large $\theta$ such that the following holds. If $P \subset \mathbb{F}_q^{n}$ satisfies $ |P| \geq \theta q^{n-k}$ and no $k$-flat intersects $P$ in more than $2|P|/\sqrt{q}$ points, then there exists a hypergraph $\mathcal{H}$ with $V(\mathcal{H}) = P$ and $E(\mathcal{H})$ consisting of some $(k,r)$-sets such that, for all $2\leq i\leq r$, \begin{equation*}
        \Delta_i(\mathcal{H}) \leq c \cdot \tau^{i-1} \frac{|E(\mathcal{H})|}{|V(\mathcal{H})|} \quad\text{with}\quad \tau = \frac{\theta q^{n-k}}{|P|q^\epsilon} ~\text{and}~ \epsilon = \frac{1}{2r},
    \end{equation*} and $\Delta_1(\mathcal{H})\leq \theta |E(\mathcal{H})|/|V(\mathcal{H})|$.
\end{lemma}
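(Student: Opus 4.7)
My plan is to construct $\mathcal{H}$ as a carefully chosen family of $(k,r)$-sets in $P$ and verify the codegree bounds via double counting over $k$-flats together with a moment estimate. Writing $\mu_F := |P \cap F|$, $s := |P|/q^{n-k}$, and $M := 2|P|/\sqrt{q}$, I would let $E(\mathcal{H})$ consist of those $r$-subsets $S \subset P$ that lie in some $k$-flat $F$ and are ``spread'' in $F$ — concretely, $|S \cap F'|$ is below a suitable threshold for every proper subflat $F' \subset F$ whose population $\mu_{F'}$ is close to its a priori maximum. The spread condition is inserted to rule out pathological configurations (for instance, most of $P$ distributed across many $k$-flats sharing a common lower-dimensional subflat $F_0$), which are compatible with the hypothesis $\mu_F \leq M$ but would inflate $\Delta_i(T)$ for $T$ sitting inside $F_0$.

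To lower-bound $|E|$, I would apply Jensen's inequality to $x \mapsto \binom{x}{r}$: since the average of $\mu_F$ over all $k$-flats equals $s \geq \theta \geq r$ (for $\theta$ large), we have $\sum_F \binom{\mu_F}{r} \geq \#\{k\text{-flats}\} \cdot \binom{s}{r} = \Omega(q^{(n-k)(k+1)} s^r)$. Dividing by the generic multiplicity $\Theta(q^{(n-k)\max(k-r+1,0)})$ of an $r$-subset among $k$-flats yields $|E| = \Omega\!\left(|P|^r/q^{(n-k)\max(r-k-1,0)}\right)$, and the spread condition excludes only a bounded fraction of these by using $\mu_{F'} \leq M$ uniformly. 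For the codegree bound, fix $T \subset P$ with $|T| = i$ spanning a $(j-1)$-flat $F_0$ (with $j \geq 2$ whenever $i \geq 2$). Every $r$-set $S \supset T$ in $\mathcal{H}$ lies in a $k$-flat $F \supset F_0$, so
\begin{equation*}
\Delta_i(T) \;\leq\; \sum_{F \supset F_0}\binom{\mu_F - i}{r-i}.
\end{equation*}
I would estimate this sum via a power-mean chain $\mu_F^{r-i} \leq M^{(r-i)-(k-j+1)} \mu_F^{k-j+1}$ (when $r - i \geq k - j + 1$) combined with $\sum_{F \supset F_0}\mu_F^{k-j+1} = O(|P|^{k-j+1})$, the latter obtained by counting ordered point-tuples-in-flats through $F_0$ in general position.

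The verification $\Delta_i \leq c\tau^{i-1}|E|/|V|$ then reduces to a polynomial inequality in $q$, which holds for $q$ sufficiently large thanks to the factor $q^{-\epsilon(i-1)}$ in $\tau^{i-1}$ with $\epsilon = 1/(2r)$; the absolute constants get absorbed by choosing $\theta$ large. The bound $\Delta_1 \leq \theta|E|/|V|$ follows by a direct count of $r$-subsets through a single vertex. The main obstacle will be calibrating the spread condition so that it prunes exactly the codegree-inflating configurations while sacrificing only a constant factor in the lower bound on $|E|$; this calibration must exploit the inherited bound $\mu_{F'} \leq M$ on every subflat dimension to argue that heavy subflats are too rare to carry a significant fraction of the pairs $(F, S)$ being counted.
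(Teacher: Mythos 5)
There is a genuine gap, and it is at the heart of the lemma. Your edge set is, up to the ``spread'' pruning, \emph{all} $(k,r)$-sets of $P$, and your codegree estimate $\Delta_i(T)\le\sum_{F\supset F_0}\binom{\mu_F-i}{r-i}$ together with the power-mean step $\mu_F^{r-i}\le M^{(r-i)-(k-j+1)}\mu_F^{k-j+1}$ only exploits the cap $M=2|P|/\sqrt q$. This is not enough. Take $m=|P|=\theta q^{n-k}$ and a single $k$-flat $F$ with $\mu_F=M$ containing a pair $T$: then $\Delta_2(T)\gtrsim\binom{M-2}{r-2}\approx (m/\sqrt q)^{r-2}$, while the target is $c\tau\,|E|/m\approx q^{-\epsilon}m^{r-2}/q^{(n-k)(r-k-2)}$; the ratio is $q^{(n-k)(r-k-2)-(r-2)/2+\epsilon}$, which blows up once $r\ge 2k+2$ (your own power-mean bound gives the slightly better exponent $(n-k)(r-k-2)-(r-k-1)/2+\epsilon$, still positive for $r\ge k+3$ even when $n-k=1$). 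The same computation already kills $\Delta_1$. Moreover, no local ``spread within $F$'' condition can rescue this: if you retain a $\delta$-fraction of the $r$-sets inside a heavy flat $F$, then by averaging over pairs in $F$ some pair has codegree $\gtrsim\delta\mu_F^{r-2}$, so you are forced to discard all but a $q^{-\Omega(1)}$ fraction of the $r$-sets in heavy flats — a global, population-dependent sparsification, not a pruning of degenerate configurations. This is exactly what the paper does: after a dichotomy on whether $P$ has $\Omega(m^k)$ affinely independent $k$-subsets (which also handles the degenerate $r$-sets you gloss over in the Jensen lower bound), it builds $E(\mathcal H)$ \emph{randomly}, including each $(k,r)$-set $K\cup S$ sitting in the flat $F$ with probability $p_{K,F}=\bigl(\tfrac{m/q^{n-k}}{m_{K,F}}\bigr)^{r-k-1}$, so each flat contributes $\approx m_{K,F}\cdot(m/q^{n-k})^{r-k-1}$ edges — linear rather than degree-$(r-k)$ in its population — and then derandomizes with Chernoff.

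You have also misidentified where the hypothesis ``no $k$-flat meets $P$ in more than $2|P|/\sqrt q$ points'' enters. In the paper it is not used as a uniform cap in a moment estimate; it is used exactly once, in the codegree case where the fixed $i$-set $T$ ($i\ge 2$) is disjoint from the spanning set $K$: two points of $T$ plus $k-1$ points of $K$ determine the flat $F$, and the last point of $K$ must then be chosen from $F\cap P$, contributing a factor $2m/\sqrt q$ instead of $m$. That single factor $q^{-1/2}$ is what beats $\tau^{i-1}\supseteq q^{-\epsilon(i-1)}\ge q^{-(r-1)/(2r)}$ with $\epsilon=1/(2r)$; your calibration discussion never isolates this mechanism, and without the random sparsification there is nothing for it to attach to.
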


\begin{proof}[Proof of Theorem~\ref{counting}]

    We assume $n,r > k$ since the other cases are trivial. Let $\mathcal{H}$ be the hypergraph with $V(\mathcal{H}) = \mathbb{F}^n_q$ and $E(\mathcal{H})$ consisting of all $(k,r)$-sets. We build a container-clique tree $T$ for $\mathcal{H}$ through the following process. We start with $T$ having one root node labelled by $\mathbb{F}_q^n$; iteratively, if there is a node $x \in T$ with $|C^x_0| \geq 2\theta q^{n-k}$ for the $\theta$ in Lemma~\ref{counting_supersaturation} (with $c$ in Lemma~\ref{HCL}), we perform an operation on $x$ as described below; the process stops when no such node exists.

    Now, we describe the operation on a node $x$. We take $C = C^x_0$ first. If there exists a \textit{rich} $k$-flat $F$ in $\mathbb{F}_q^n$, that means, $|C \cap F| \geq |C^x_0|/\sqrt{q}$, then we delete the subset $C \cap F$ from $C$. We repeat this deletion step until either (i) $|C| < |C^x_0|/2$ or (ii) no rich $k$-flat exists. We use $K_1,K_2,\dots,K_\ell$ to denote the deleted subsets. Note that every $K_i$ is a clique of $\mathcal{H}$. If the deletion steps ended with (i), then we create a child $y$ for $x$ such that $y$ copies the label of $x$, but with $C^x_0$ replaced by $C$ and all $K_i$'s appended.

    If the deletion steps ended with (ii), we consider an auxiliary hypergraph $\mathcal{H}'$ output by applying Lemma~\ref{counting_supersaturation} to $C$ as $P$ and $c$ as in Lemma~\ref{HCL}. Here, $|C| \geq |C^x_0|/2 \geq \theta q^{n-k}$ and the absence of rich flats are used. Thereafter, we apply Lemma~\ref{HCL} to $\mathcal{H}'$ with $\tau$ as given in Lemma~\ref{counting_supersaturation} to obtain a family $\mathfrak{C}$ of subsets of $C$. For each $C' \in \mathfrak{C}$, we create a child $y$ for $x$ with $C^y_0 = C'$, $C^y_1 = C^x_1, \dots, C^y_{\ell_x}=C^x_{\ell_x}$, $C^y_{\ell_x+1} = K_1,\dots, C^y_{\ell_x+\ell} = K_\ell$. Thus, the node operation is described. This process will give us a container-clique tree $T$ by the same argument as in Proposition~\ref{no-three-in-line_key}.

    We argue that each node operation shrinks the size of $C^x_0$ by a constant factor. This is self-evident if the deletion steps of $x$ ended with (i). If the deletion steps of $x$ ended with (ii), Lemma~\ref{HCL}(c) tells us that $|E(\mathcal{H}'[C'])| \leq (1-c)|E(\mathcal{H}')|$ for each $C' \in \mathfrak{C}$ in the node operation of $x$. It is also guaranteed that $\Delta_1(\mathcal{H}')\leq \theta |E(\mathcal{H}')|/|V(\mathcal{H}')|$. Thus, we can compute \begin{equation*}
        (|C|-|C'|) \cdot \theta\frac{|E(\mathcal{H'})|}{|V(\mathcal{H'})|}\geq(|C|-|C'|)\cdot\Delta_1(\mathcal{H}')\geq|E(\mathcal{H'}[C\setminus C'])|\geq c|E(\mathcal{H}')|.
    \end{equation*} Simplifying the inequality above gives $|C'| \leq (1-c/\theta)|C|$, which means $|C^y_0| \leq (1-c/\theta)|C^x_0|$ for every child $y$ of $x$.

    Next, we examine some quantitative properties of $T$. Firstly, since each node operation shrinks the size of $C^x_0$ by a constant factor, the height of $T$ is at most $O(\log q)$. Secondly, since each node operation extends the label by at most $\sqrt{q}$ additional cliques, every node has label length at most $O(\sqrt{q}\log q)$. By our choice of $\tau$ and Lemma~\ref{HCL}(b), the degree of a node will be at most $2^{O(q^{n-k-\epsilon}\log q)}$ with $\epsilon=1/(2r)$ as in Lemma~\ref{counting_supersaturation}, which implies that $T$ has at most $2^{O(q^{n-k-\epsilon}\log^2 q)}$ many leaves. Lastly, we have $|C^x_0| < 2\theta q^{n-k}$ for all leaf $x$.

    Finally, we upper bound the number of $(k,r)$-evasive sets in $\mathbb{F}_q^n$, which is exactly the number $\aleph$ of independent sets in $\mathcal{H}$. We denote the number of leaves as $\nu$, the maximum size of $C^x_0$ for leaf $x$ as $\chi$, the maximum size of $C^x_i$ for $i\neq 0$ as $\kappa$, and the maximum label length $\ell_x$ as $\lambda$. Since an independent set and a clique intersect in less than $r$ vertices, we have \begin{equation*}
        \aleph \leq \nu \cdot \binom{\kappa}{r}^\lambda \cdot 2^{\chi+r\lambda}  \leq 2^{O(q^{n-k-\epsilon}\log^2 q)} \cdot \binom{q^2}{r}^{\sqrt{q}\log q} \cdot 2^{O(q^{n-k})} \leq 2^{O(q^{n-k})}.
    \end{equation*} Therefore, we conclude the proof.
\end{proof}

\begin{proof}[Proof of Lemma~\ref{counting_supersaturation}]

    Let us write $|P| = m$. Note that $m \to \infty$ as $q \to \infty$ by our hypothesis. We use $\text{span}(S)$ to denote the minimal affine subspace containing $S \subset \mathbb{F}_q^n$, and consider the collection of $k$-sets in general position, that is, \begin{equation*}
        \mathcal{G} = \{K \subset P:~ |K| = k ~\text{and}~ \dim \text{span}(K) = k - 1\}.
    \end{equation*}

    We first solve the situation that $\mathcal{G} = o(m^k)$. We randomly choose $r$ points $v_1,\dots,v_r$ in $P$ with possible repetitions. For any subset $K$ obtained by taking $k$ entries from $(v_1,\dots,v_r)$, we have $\Pr[\dim \text{span}(K) = k - 1] < o(1)$ as a consequence of $\mathcal{G} = o(m^k)$. Let $S$ be the set $\{v_1,\dots,v_r\}$. Notice that if $\dim \text{span}(S) \geq k - 1$, then there exists $K \subset S$ with $|K| = k$ and $\dim \text{span}(K) = k - 1$. Hence we can use union bound to conclude  $\Pr[\dim \text{span}(S) \leq k] > 1 - o(1)$. This implies that there exists $\Omega(m^r)$ many $(k,r)$-sets in $P$. Thereafter, we take $E(\mathcal{H})$ to be all such $(k,r)$-sets. Because $\Delta_i(\mathcal{H}) \leq m^{r-i}$ trivially, we can check by computation that $\mathcal{H}$ satisfies the wanted property for some suitable $\theta$. 
    
    For the rest of this proof, we shall assume $\mathcal{G} = \Omega(m^k)$. We describe a random construction of $\mathcal{H}$. For each $K \in \mathcal{G}$, we notice $G_K = \text{span}(K)$ is a $(k-1)$-flat which is contained in $(1 \pm o(1)) q^{n-k}$ many $k$-flats, and the intersection of each pair of these $k$-flats is $G_K$. Since $|G_K \cap P| \leq 2m/\sqrt{q}$, we have $|P \setminus G_K| \geq m/2$. Let $\mathcal{F}_K$ be the collection of $k$-flats $F$ containing $G_K$ such that the quantity $m_{K,F} = |F \cap (P\setminus G_k)| \geq \frac{1}{3} \cdot m/q^{n-k}$. Note that $\sum_{F\in \mathcal{F}_K} m_{K,F} \geq m/3$ because flats not in $\mathcal{F}_K$ have small incidence contributions. Next, for each $(r-k)$-set $S$ in $F \cap (P\setminus G_k)$, we notice $K \cup S$ is a $(k,r)$-set, and we include it into $E(\mathcal{H})$ with probability\begin{equation*}
        p_{K,F} = \left( \frac{1}{m_{K,F}} \frac{m}{q^{n-k}} \right)^{r - k - 1}.
    \end{equation*} We remark that, although different choices of $K, F, S$ may result in the same $K \cup S$, each such $(k,r)$-set will only be considered at most $\binom{r}{k}$ many times. This is because $K \cup S$ spans hence also determines $F$.

    Next, we estimate the expectations of $|E(\mathcal{H})|$ and $\Delta_i(\mathcal{H})$. For the former, we can lower bound it easily \begin{align*}
        \mathbb{E}(|E(\mathcal{H})|) &\geq  \binom{r}{k}^{-1} \sum_{K \in \mathcal{G}} \sum_{F\in \mathcal{F}_K}\binom{m_{K,F}}{r-k} p_{K,F}\\
        &\gtrsim \sum_{K \in \mathcal{G}} \sum_{F\in \mathcal{F}_K} m_{K,F}^{r-k} \cdot \left( \frac{1}{m_{K,F}} \frac{m}{q^{n-k}} \right)^{r - k - 1}\\
        &\gtrsim \sum_{K \in \mathcal{G}} \sum_{F\in \mathcal{F}_K}m_{K,F} \cdot \left(\frac{m}{q^{n-k}} \right)^{r - k - 1}\\
        &\gtrsim \sum_{K \in \mathcal{G}} m \cdot \left(\frac{m}{q^{n-k}} \right)^{r - k - 1}\\
        &\gtrsim m^k \cdot m \cdot \left(\frac{m}{q^{n-k}} \right)^{r - k - 1}\\
        &= m^r/q^{(n-k)(r-k-1)}.
    \end{align*} In the computation above, we need to choose a sufficiently large $\theta$ depending on $k,n,r$. For the expectation of $\Delta_i(\mathcal{H})$, we fix distinct $v_1,\dots,v_i \in P$ arbitrarily and upper bound the expected number of edges in $\mathcal{H}$ containing them. To this end, we count the number of ways to choose $K,F,S$ such that $K \cup S$ contains $\{v_1,\dots,v_i\}$. Our counting depends on the number $|\{v_1, \dots, v_i\} \cap K|$ which we denote as $j$.
    
    In the case $j = i$, there are at most $m^{k - i}$ options for $K$ (and we denote them as $\mathcal{G}'$). Hence the expected number of edges from this case is at most \begin{equation*}
        \sum_{K \in \mathcal{G}'} \sum_{F \in \mathcal{F}_K} \binom{m_{K,F}}{r-k} p_{K,F} \lesssim \sum_{K \in \mathcal{G}'} m \cdot \left(\frac{m}{q^{n-k}} \right)^{r-k - 1} \lesssim m^{r-i}/q^{(n-k)(r-k-1)}.
    \end{equation*}

    In the case $0 \leq j < i$, there are at most $m^{k-j}$ options for $K$ (denoted as $\mathcal{G}'$), followed by only one option for $F$ (denoted as $F_K$) since a point in $\{v_1, \dots, v_i\} \setminus K$ will determine $F$, and $\binom{m_{K,F}}{r-k-i+j}$ options for $S$. So the expected number of edges from this case is at most
    \begin{equation*}
        \sum_{K \in \mathcal{G}'} \binom{m_{K,F_K}}{r-k-i+j} p_{K,F_K} \lesssim \sum_{K \in \mathcal{G}'} m_{K,F_K}^{j+1-i} \cdot \left(\frac{m}{q^{n-k}} \right)^{r-k - 1} 
        \lesssim m^{r-i}/q^{(n-k)(r-k + j -i)}.
    \end{equation*} Here, we used $j + 1 \leq i$ and $m_{K,F_K} = \Omega(m/q^{n-k})$ in the computation above.

    In the case $j = 0$ and $i > 1$, we can have a better estimate based on the following simple fact from linear algebra whose proof we skip.
    \begin{fact}
        For a set $S$ in an affine space of size at least two, if there are two points $u,v \not \in S$ such that $S \cup u$ is affinely independent, then there exists $w \in S$ such that $(S \setminus w) \cup \{u,v\}$ is affinely independent.
    \end{fact}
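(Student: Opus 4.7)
The plan is to write $|S|=k\geq 2$ and observe that $S\cup\{u\}$, being affinely independent, spans a $k$-flat which I shall call $L$. I would split the argument on whether $v$ lies in $L$. If $v\notin L$, then the whole set $S\cup\{u,v\}$ of $k+2$ points is affinely independent, so removing any single $w\in S$ leaves the $k+1$ points $(S\setminus\{w\})\cup\{u,v\}$ affinely independent, and we are done.

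In the remaining case $v\in L$, I would use the fact that $S\cup\{u\}$ is an affine basis of $L$ to write uniquely
\begin{equation*}
v=\sum_{s\in S}\lambda_s s+\mu u,\qquad \sum_{s\in S}\lambda_s+\mu=1.
\end{equation*}
Since $v\neq u$ (the two points are distinct) and $v\notin S$, the tuple $(\lambda_s)_{s\in S}$ cannot be identically zero; pick any $w\in S$ with $\lambda_w\neq 0$. I claim that this $w$ works.

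To verify the claim, I would argue by contradiction: suppose there is a nontrivial affine dependence
\begin{equation*}
\sum_{s\in S\setminus\{w\}}a_s s+bu+cv=0,\qquad \sum_{s\in S\setminus\{w\}}a_s+b+c=0.
\end{equation*}
Substituting the expansion of $v$ and collecting terms turns this into an affine relation on $S\cup\{u\}$, where the coefficient of $w$ is exactly $c\lambda_w$, the coefficient of $s\in S\setminus\{w\}$ is $a_s+c\lambda_s$, and the coefficient of $u$ is $b+c\mu$; a short check shows the coefficient sum equals $\sum a_s+b+c=0$, so this is a genuine affine combination. Affine independence of $S\cup\{u\}$ then forces every coefficient to vanish, and $c\lambda_w=0$ combined with $\lambda_w\neq 0$ gives $c=0$, which cascades to $a_s=0$ and $b=0$, contradicting nontriviality.

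I do not expect a serious obstacle here; the only subtlety is choosing $w$ so that $\lambda_w\neq 0$ (rather than an arbitrary $w$), which is what makes the substitution pin down $c=0$. The hypotheses $u\neq v$, $v\notin S$, and $|S|\geq 2$ are used precisely to guarantee that such a $w$ exists: if every $\lambda_s$ were zero then $v=\mu u$ with $\mu=1$, forcing $v=u$.
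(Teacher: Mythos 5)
Your proof is correct. Note that the paper itself offers no argument for this fact (it is stated with ``whose proof we skip''), so there is no official proof to compare against; your write-up fills that gap. The case split on whether $v$ lies in the flat spanned by $S \cup \{u\}$, followed by expanding $v$ in barycentric coordinates with respect to the affine basis $S \cup \{u\}$ and choosing $w$ with $\lambda_w \neq 0$, is exactly the right mechanism: substituting the expansion into a putative affine dependence of $(S\setminus w)\cup\{u,v\}$ yields an affine dependence of $S\cup\{u\}$ in which the coefficient of $w$ is $c\lambda_w$, forcing $c=0$ and then all other coefficients to vanish. One tiny inaccuracy in your closing remark: the hypothesis $v\notin S$ is not what guarantees some $\lambda_w\neq 0$ --- only $v\neq u$ is needed for that (if all $\lambda_s$ vanish then $\mu=1$ and $v=u$) --- and $|S|\geq 2$ is likewise not essential to your argument. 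This does not affect the validity of the proof.
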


    
    According to this fact, in order to determine $K$ and $F$, it suffices to choose $k-1$ points from $P$ for $K$; and they will determine $F$ together with $v_1, v_2$ already in $S$; and then we can choose the last point for $K$ from $F \cap P$. After that, there are $\binom{m_{K,F}}{r-k-i}$ options for $S$, so the expected number of such edges, fixing $K$ and $F$, are \begin{equation*}
        \binom{m_{K,F}}{r-k-i} p_{K,F} \lesssim m_{K,F}^{1-i} \cdot \left(\frac{m}{q^{n-k}} \right)^{r-k - 1} \lesssim m^{r-k-i}/q^{(n-k)(r-k-i)}.
    \end{equation*} Here the inequality $m_{K,F} = \Omega(m/q^{n-k})$ is used. Together with the options for $K$ and $F$, the expected number of edges from this case is at most \begin{align*}
        m^{k-1} \cdot \left( 2m/\sqrt{q} \right) \cdot m^{r-k-i}/q^{(n-k)(r-k-i)} \lesssim m^{r-i}/q^{(n-k)(r-k-i)+1/2}.
    \end{align*} Here, crucially, we used the hypothesis that $|F \cap P| \leq 2m/\sqrt{q}$ for any $k$-flat $F$.

    Combining all three cases, we can check that, for all $2\leq i\leq r$, \begin{equation*}
        \mathbb{E}(\Delta_i(\mathcal{H})) \lesssim \frac{1}{\sqrt{q}} \left(\frac{q^{n-k}}{m}\right)^{i-1}\frac{\mathbb{E}(|E(\mathcal{H})|)}{m}, 
    \end{equation*} and $\mathbb{E}(\Delta_1(\mathcal{H})) \lesssim \mathbb{E}(|E(\mathcal{H})|)/m$. Thereafter, it is a standard application of Chernoff's bound to argue that such $\mathcal{H}$ with wanted properties exists deterministically. We will take $\theta$ to be sufficiently large to cover the hidden constant in the asymptotic notation $\lesssim$.
\end{proof}

\section{Remarks}\label{sec:remark}

A \textit{general position set} in $\mathbb{F}_q^n$ is a point set that is $(n-1,n+1)$-evasive. Considering all subsets of the moment curve $\{(x,x^2,\dots,x^n):~ x\in \mathbb{F}_q\}$, we know that $\mathbb{F}^n_q$ contains at least $2^q$ many general position sets. On the other hand, Bhowmick and Roche-Newton~\cite{bhowmick2022counting} proved that $\mathbb{F}^2_q$ contains at most $2^{q+q^{4/5+o(1)}}$ many general position sets. This result was later generalized by Chen--Liu--Nie--Zeng~\cite{chen2023random} to higher dimensions with the exponent $4/5$ replaced by $2n/(2n+1)$ for the number of general position sets in $\mathbb{F}^n_q$. With the help of container-clique trees, we can improve this upper bound to the following.
\begin{theorem}\label{general_position}
For fixed integer $n\ge 2$ and prime power $q\to\infty$, the number of general position sets in $\mathbb{F}^n_q$ is at most \begin{equation*}
    2^{q+q^{\frac{2}{3}+o(1)}}.
\end{equation*}
\end{theorem}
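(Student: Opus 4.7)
The plan is to apply the container-clique tree framework of Section~\ref{sec:cctree} to the hypergraph $\mathcal{H}$ on $V(\mathcal{H}) = \mathbb{F}_q^n$ whose edges are all $(n-1,n+1)$-sets, so that independent sets of $\mathcal{H}$ coincide exactly with general position sets. Mirroring the proof of Theorem~\ref{counting}, I would iteratively perform the node operation: peel off rich hyperplanes (those meeting the current root container $C^x_0$ in at least $|C^x_0|/\sqrt{q}$ points) into clique labels, then apply the container lemma to the residual subcontainer. The critical change compared to Theorem~\ref{counting} is that the iteration runs much further, terminating only when $|C^x_0| \leq q + q^{2/3+o(1)}$ rather than stopping at the threshold $2\theta q$.

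The key technical ingredient is a strengthened supersaturation lemma, tailored to the regime where $|P|$ is within a $q^{-1/3}$ factor of $q$: if $P \subset \mathbb{F}_q^n$ satisfies $|P| \geq q + q^{2/3}$ and no hyperplane contains more than $2|P|/\sqrt{q}$ points of $P$, then there exists a sub-hypergraph $\mathcal{H}'$ of $(n-1,n+1)$-sets in $P$ with $|E(\mathcal{H}')|/|V(\mathcal{H}')| \gtrsim |P|^n / q$ and $\Delta_i(\mathcal{H}') \leq c \tau^{i-1} |E(\mathcal{H}')|/|V(\mathcal{H}')|$ for all $2\leq i\leq n+1$ with $\tau = q^{-1/3+o(1)}$. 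A direct application of Lemma~\ref{counting_supersaturation} is inadequate here because, once $|P|\sim q$, the typical hyperplane through an $(n-2)$-flat spanned by $P$-points contains only $O(1)$ further $P$-points, collapsing the inclusion probability $p_{K,F}$. I would work around this by a more refined random construction that simultaneously exploits sparsity of $P$ in flats of every dimension $1 \leq d \leq n-1$---a structural property that we can arrange to be automatically propagated along the branches of the tree by pruning rich lower-dimensional flats at coarser scales before ever reaching the small-$|P|$ regime.

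With the supersaturation lemma in hand, the remainder of the argument follows the template of Theorem~\ref{counting} and Proposition~\ref{no-three-in-line_key}. The tree has height $O(\log q)$, each application of the container lemma contributes $\tau|V|\log(1/\tau) = q^{2/3+o(1)}$ to $\log \nu$, the total leaf count is $\nu \leq 2^{q^{2/3+o(1)}}$, label length is $\lambda \leq q^{1/2+o(1)}$, and every clique $C^x_i$ with $i\neq 0$ has size at most $q^{n-1}$. Since a general position set (independent set of $\mathcal{H}$) intersects any clique in at most $n$ vertices, we conclude
\begin{equation*}
    \aleph \leq \nu \cdot 2^{|C^x_0|} \cdot \binom{\kappa}{n}^{\lambda} \leq 2^{q^{2/3+o(1)}} \cdot 2^{q + q^{2/3+o(1)}} \cdot 2^{q^{1/2+o(1)} \log q} \leq 2^{q + q^{2/3+o(1)}},
\end{equation*}
which matches the claimed bound.

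The principal obstacle is establishing the strengthened supersaturation lemma in the regime $|P|\sim q$, where the averaging argument of Lemma~\ref{counting_supersaturation} degenerates and where, under the sole hypothesis of hyperplane-sparsity, a naive estimate only gives $\Delta_2 = \Omega(q^{(n-3)/2} \cdot |E|/|V|)$, exceeding the container lemma's tolerance for every $n\geq 3$. The $q^{2/3}$ exponent in the final bound reflects exactly the balance between the $\tau|V|\log(1/\tau)$ cost per container-lemma application and the residual container size at termination, so any improvement in the supersaturation's $\tau$ would translate directly to an improvement in the theorem.
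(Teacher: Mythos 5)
Your overall architecture matches the paper's sketch---a container-clique tree on the $(n+1)$-uniform hypergraph of $(n-1,n+1)$-sets, peeling rich hyperplanes into clique labels, running the container lemma on the sparse residue, and terminating at $|C^x_0|\le q+q^{2/3+o(1)}$---but there is a genuine gap at exactly the point you flag, and the parameter you imported from Theorem~\ref{counting} is miscalibrated in a way that makes the gap unfillable. First, your ``strengthened supersaturation lemma'' is false as stated. Take $P$ to be the moment curve (which is in general position) together with $q^{2/3}$ further generic points: then $|P|=q+q^{2/3}$ and no hyperplane is rich, yet the number of $(n-1,n+1)$-sets in $P$ is only $O(q^{n-1/3})$ (every such set must use at least one extra point, and the dominant contribution is $q^{2/3}\cdot q^{n-1}$ from sets using exactly one), far below the demanded $|V(\mathcal{H}')|\cdot|P|^n/q=\Omega(q^n)$. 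Near the termination threshold the correct edge density is $|E|/|V|\approx q^{n-2}t$ where $t=|P|-q$ is the \emph{excess}, not $|P|^n/q$; this degeneration is precisely why the parameters of Theorem~\ref{counting} cannot be recycled, and your proposed remedy (sparsity in flats of every dimension, propagated along the tree) is never formulated or verified.

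Second, the richness threshold must be $|C^x_0|/q^{2/3-o(1)}$, not $|C^x_0|/\sqrt q$, and this is where the exponent $2/3$ actually comes from. With pruning threshold $|C|/K$ and excess $t$, the available bounds give $|E|/|V|\approx q^{n-2}t$ while the provable codegree bound is $\Delta_2\lesssim |C|^{n-2}\cdot(|C|/K)$ (choose $n-2$ further points, then one last point in the hyperplane they determine, which may carry up to $|C|/K$ points of $C$); the container hypothesis then forces $\tau\gtrsim q/(Kt)$, hence a per-node container cost of order $q^2/(Kt)$, against a label-length cost of order $K$ per node and a leaf-container excess of $t$. Optimizing $\max\{K,\,t,\,q^2/(Kt)\}$ gives $K=t=q^{2/3}$; with $K=\sqrt q$ fixed the best achievable error exponent is $3/4$, not $2/3$ (and uniform random sparsification of the edge set leaves the ratio $\Delta_2|V|/|E|$ unchanged, so it cannot rescue this). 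The paper's fix is therefore not a stronger supersaturation statement but a more aggressive pruning threshold, after which the higher-dimensional analogue of Lemma~\ref{no-three-in-line_supersaturation} (Lemma~4.1 of \cite{chen2023random}) suffices as is. A further small point: your $\tau=q^{-1/3+o(1)}$ must scale like a constant over $|C|$ in the early stages of the tree when $|C|\gg q$, otherwise $\tau|V|\log(1/\tau)$, and with it the leaf count, exceeds $2^{q^{2/3+o(1)}}$.
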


It is unclear how sharp this upper bound is, and its proof is similar to those of Theorems~\ref{counting} and~\ref{no-three-in-line}, hence we omit its proof but give a rough sketch instead. To prove Theorem~\ref{general_position}, we apply the container-clique tree process to the hypergraph $\mathcal{H}$, with $V(\mathcal{H}) = \mathbb{F}^n_q$ and $E(\mathcal{H})$ consisting of all $(n-1,n+1)$-sets. We should use a higher-dimensional version of Lemma~\ref{no-three-in-line_supersaturation} as the supersaturation result (Lemma~4.1 in \cite{chen2023random} suffices). For the node operation on $x$, a hyperplane $F$ will be determined as ``rich'' if \begin{equation*}
    |C\cap F| \gtrsim |C^x_0|/q^{2/3 - o(1)}.
\end{equation*} After the container-clique tree is constructed, the number of leaves and the maximum label length in the container-clique tree should both be upper bounded by $2^{q^{2/3+o(1)}}$, $|C^x_i| \leq q^n$ holds trivially for every node, and we should have $|C^x_0| \leq q+q^{2/3+o(1)}$ for every leaf.

\medskip \noindent {\bf Acknowledgement.} 
We wish to thank József Balogh, David Conlon, Alex Hof, Robert A. Krueger especially, and Jacques Verstraete for helpful comments and inspiring discussions.

\bibliographystyle{abbrv}
{\footnotesize\bibliography{main}}

\end{document}